\setlist[enumerate]{leftmargin=*}
\newtheorem{theorem}{Theorem}[section]
\newtheorem{lemma}[theorem]{Lemma}
\newtheorem{corollary}[theorem]{Corollary}
\newtheorem{prop}[theorem]{Proposition}
\theoremstyle{definition}
\theoremstyle{remark}
\newtheorem{remark}[theorem]{Remark}
\numberwithin{equation}{section}
\newcommand{\ZZ}{\mathbb{Z}}
\newcommand{\NN}{\mathbb{N}}
\newcommand{\RR}{\mathbb{R}}
\newcommand{\CC}{\mathbb{C}}
\DeclareMathOperator{\sgn}{sgn}
\newcommand{\hnabla}{\nabla_{\!\varepsilon}}
\newcommand{\Rz}{\mathcal{R}}
\newcommand{\opL}{\mathcal{L}}
\newcommand{\trF}{\mathcal{F}}
\newcommand{\cS}{\mathcal{S}}
\newcommand{\cK}{\mathcal{K}}
\newcommand{\cP}{\mathcal{P}}
\newcommand{\id}{\mathrm{id}}
\newcommand{\Cv}{Cv}
\DeclareMathOperator{\imm}{Im}
\newcommand{\myth}{\omega_*}
\renewcommand{\colon}{\,:\,}
\newcommand{\pred}{\mathfrak{p}}
\renewcommand{\succ}{\mathfrak{s}}
\newcommand{\defeq}{\mathrel{:=}}
\DeclareMathOperator{\supp}{supp}
\begin{document}

\title{Riesz transform for a flow Laplacian on homogeneous trees}

\author[M. Levi]{Matteo Levi}
\address[M. Levi]{MaLGa Center \\ DIBRIS \\ Universit\`a di Genova \\ Via Dodecaneso 35 \\ 16146 Genova \\ Italy}
\email{m.l.matteolevi@gmail.com}

\author[A. Martini]{Alessio Martini}
\address[A. Martini]{Dipartimento di Scienze Matematiche ``Giuseppe Luigi Lagrange'' \\ Dipartimento di Eccellenza 2018-2022 \\ Politecnico di Torino \\ Corso Duca degli Abruzzi 24 \\ 10129 Torino \\ Italy}
\email{alessio.martini@polito.it}

\author[F. Santagati]{Federico Santagati}
\address[F. Santagati]{Dipartimento di Scienze Matematiche ``Giuseppe Luigi Lagrange'' \\ Dipartimento di Eccellenza 2018-2022 \\ Politecnico di Torino \\ Corso Duca degli Abruzzi 24 \\ 10129 Torino \\ Italy}
\email{federico.santagati@polito.it}

\author[A. Tabacco]{Anita Tabacco}
\address[A. Tabacco]{Dipartimento di Scienze Matematiche ``Giuseppe Luigi Lagrange'' \\ Dipartimento di Eccellenza 2018-2022 \\ Politecnico di Torino \\ Corso Duca degli Abruzzi 24 \\ 10129 Torino \\ Italy}
\email{anita.tabacco@polito.it}

\author[M. Vallarino]{Maria Vallarino}
\address[M. Vallarino]{Dipartimento di Scienze Matematiche ``Giuseppe Luigi Lagrange'' \\ Dipartimento di Eccellenza 2018-2022 \\ Politecnico di Torino \\ Corso Duca degli Abruzzi 24 \\ 10129 Torino \\ Italy}
\email{maria.vallarino@polito.it}

\thanks{Work partially supported by the MIUR project ``Dipartimenti di Eccellenza 2018-2022'' (CUP E11G18000350001) and the Project ``Harmonic analysis on continuous and discrete structures'' (bando Trapezio Compagnia di San Paolo CUP E13C21000270007). The first-named author also acknowledges the financial support of the AFOSR project FA9550-18-1-7009 (European Office of Aerospace Research and Development). The second-named author gratefully acknowledges the financial support of Compagnia di San Paolo through the ``Starting Grant'' programme. The authors are members of the Gruppo Nazionale per l'Analisi Matema\-tica, la Probabilit\`a e le loro Applicazioni (GNAMPA) of the Istituto Nazionale di Alta Matematica (INdAM)}

\subjclass[2020]{05C05, 05C21, 42B20, 43A99}
\keywords{Tree, nondoubling measure, Riesz transform, heat kernel}

\begin{abstract}
We prove the $L^p$-boundedness, for $p \in (1,\infty)$, of the first order Riesz transform associated to the flow Laplacian on a homogeneous tree with the canonical flow measure. This result was previously proved to hold for $p \in (1,2]$ by Hebisch and Steger, but their approach does not extend to $p>2$ as we make clear by proving a negative endpoint result for $p = \infty$ for such operator.
We also consider a class of ``horizontal Riesz transforms'' corresponding to differentiation along horocycles, which inherit all the boundedness properties of the Riesz transform associated to the flow Laplacian, but for which we are also able to prove a weak type $(1,1)$ bound for the adjoint operators, in the spirit of the work by Gaudry and Sj\"ogren in the continuous setting.
The homogeneous tree with the canonical flow measure is a model case of a measure-metric space which is nondoubling, of exponential growth, does not satisfy the Cheeger isoperimetric inequality, and where the Laplacian does not have spectral gap.
\end{abstract}

\maketitle

\section{Introduction}

Let $T$ be a locally finite tree, which is a connected graph with no cycles where each vertex $x$ has a finite number $q(x)+1$ of neighbours. We identify $T$ with its set of vertices and equip it with the standard graph distance $d$, counting the number of edges along the shortest path connecting two vertices. We fix a reference point $o\in T$ and set $|x| \defeq d(x,o)$. A ray is a half-infinite geodesic, with respect to the distance $d$, emanating from $o$, and the natural boundary $\partial T$ of $T$ is identified with the family of rays. We choose a \emph{mythical ancestor} $\myth \in \partial T$ and consider the horocyclic foliation it induces on the tree: for each vertex $x$ there exists a unique integer index $\ell(x)$, which we call the \emph{level} of $x$, indicating to which horocycle the vertex belongs. The level function is given by $\ell(x) = d(o,x\wedge \myth)-d(x,x \wedge \myth)$, where $x \wedge \myth$ denotes the closest point to $x$ on the ray $\myth$. For each vertex $x$ we define its predecessor $\pred(x)$ as the only neighbour vertex such that $\ell(\pred(x)) = \ell(x)+1$, while $\succ(x)$ will denote the set of the remaining neighbours, the successors of $x$, whose level is $\ell(x)-1$. We introduce a partial order relation on $T$ by writing $x \geq y$ if $d(x,y) = \ell(x)-\ell(y)$.

A \emph{flow} on $T$ is a function $m$ satisfying the flow condition
\begin{equation}\label{eq: flow}
  m(x) = \sum_{y \in \succ(x)} m(y) \qquad \forall x \in T.
\end{equation}
Flows, which are common objects in Operations Research and Computer Science, turn out to have interesting properties also from a Harmonic Analysis point of view. Indeed, $p$-harmonic functions on trees can be characterized as appropriate nonlinear potentials of flows, see \cite{CL}. For a more wide-ranging account on the importance of flows in Probability and Analysis on trees, we refer the reader to \cite{LP}.

In this note we are interested in \emph{flow measures}, which are positive flows.
A flow $m$ is said to be \emph{canonical} if it distributes mass uniformly among the successors of each point, i.e., if $m(x) = q(x) \, m(y)$, for every $x \in T$, $y \in \succ(x)$. Up to normalization, the canonical flow is unique: we will refer to the one satisfying $m(o) = 1$ as \emph{the} canonical flow, and denote it with the letter $\mu$.

\smallskip

In the sequel we will deal with the homogeneous tree $T = T_q$, on which $q(x)=q$ for some integer $q\geq 2$ and every $x\in T_q$, equipped with the canonical flow measure $\mu(x) = q^{\ell(x)}$. A systematic analysis of ``singular integrals'' on $(T,\mu)$ was initiated in a remarkable paper by Hebisch and Steger \cite{hs}, where they developed an \emph{ad hoc} Calder\'on--Zygmund theory and studied the boundedness properties of spectral multipliers and the Riesz transform associated with a suitable Laplacian $\opL$, which we shall call the \emph{flow Laplacian}. Specifically, the flow Laplacian $\opL$ on $(T,\mu)$ can be written as
\[
\opL = \frac{1}{2} \, \nabla^* \, \nabla;
\]
here $\nabla$ denotes the \emph{flow gradient} on $(T,\mu)$, defined by
\begin{equation}\label{eq:vgrad}
\nabla f(x) = f(x) - f(\pred(x))
\end{equation}
for all $f : T \to \CC$ and $x \in T$, while $\nabla^*$ denotes the adjoint of $\nabla$ with respect to the $L^2(\mu)$-pairing. The Riesz transform $\Rz$ on $(T,\mu)$ can be then defined as
\[
\Rz = \nabla \opL^{-1/2}.
\]
In Section \ref{sec: lap e riesz} a more extensive discussion of the definition of the flow Laplacian is given, and the arising notion of Riesz transform is compared with other notions appearing in the literature. In \cite{ATV2, ATV1} an atomic Hardy space $H^1(\mu)$ and a space $BMO(\mu)$ adapted to $(T,\mu)$ were introduced and studied, and in \cite{San} the characterization of the Hardy space in terms of the Riesz transform $\Rz$ was proved to fail, i.e., the atomic Hardy space $H^1(\mu)$ is strictly contained in the space of integrable functions on $(T,\mu)$ whose Riesz transform is integrable.

In this note we aim at completing the study of the boundedness properties of the Riesz transform $\Rz$ on $(T,\mu)$. By \cite[Theorem 2.3]{hs} and \cite{ATV1}, $\Rz$ is of weak type $(1,1)$, bounded on $L^p(\mu)$ for $p\in (1,2]$, and bounded from $H^1(\mu)$ to $L^1(\mu)$: this follows from the fact that the integral kernel of $\Rz$ satisfies an ``integral H\"ormander condition'' adapted to this setting (see \eqref{eq: hormander} below). The problem of the $L^p$-boundedness for $p\in (2,\infty)$ was left open in \cite{hs} and we solve it in the following theorem.

\begin{theorem}\label{riesztr}
The Riesz transform $\Rz$ is bounded on $L^p(\mu)$ for $p \in (1,\infty)$.
\end{theorem}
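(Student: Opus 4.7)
By \cite{hs}, the Riesz transform $\Rz$ is already known to be bounded on $L^p(\mu)$ for $p \in (1,2]$; the content of the theorem is therefore the extension to $p \in (2, \infty)$. By duality, this is equivalent to proving that the adjoint $\Rz^* = \opL^{-1/2} \nabla^*$ is bounded on $L^{p'}(\mu)$ for $p' \in (1,2)$. The $L^p$-boundedness of $\Rz$ for $p \leq 2$ was obtained in \cite{hs} via a weak type $(1,1)$ estimate and Marcinkiewicz interpolation, but the same strategy cannot apply to $\Rz^*$: as indicated by the negative endpoint result for $\Rz$ at $p = \infty$, the operator $\Rz^*$ fails to be of weak type $(1,1)$, so neither a standard nor a dual integral H\"ormander condition can be invoked.

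The approach I would take is based on an explicit description of the integral kernel of $\Rz$. Writing $\opL = I - P$, where $P$ is the self-adjoint Markov transition operator on $L^2(\mu)$ assigning weight $1/2$ to the predecessor and $1/(2q)$ to each successor, and combining the subordination formula
\[
\opL^{-1/2} = \frac{1}{\sqrt{\pi}} \int_0^\infty t^{-1/2} e^{-t\opL} \, dt
\]
with the expansion $e^{-t\opL} = e^{-t} \sum_{k \geq 0} t^k P^k / k!$, one can express the kernel $K_{\Rz}(x,y)$ as a series involving the $k$-step transition probabilities of the random walk driven by $P$, acted on by $\nabla$ in the first variable. On the homogeneous tree these transition probabilities reduce to one-dimensional nearest-neighbour random walk quantities on $\ZZ$ and admit explicit generating functions, which give sharp pointwise bounds for $K_{\Rz}(x,y)$ in terms of the levels $\ell(x), \ell(y)$ and the geodesic distance $d(x,y)$.

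With such pointwise estimates in hand, I would decompose $\Rz$ into pieces indexed by the geometric relationship between $x$ and $y$ --- for instance, whether $y \geq x$, $x \geq y$, or neither --- and establish $L^p$-boundedness of each piece for $p \in (2, \infty)$ via a Schur test, with Schur weights chosen as suitable powers of $\mu = q^{\ell(\cdot)}$. Those pieces whose kernels satisfy a dualized integral H\"ormander condition can alternatively be treated by the Calder\'on--Zygmund theory developed in \cite{hs}, so the core of the argument concentrates on the pieces responsible for the failure of the $p = \infty$ endpoint.

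The principal obstacle is obtaining pointwise bounds on $K_{\Rz}(x,y)$ that are sharp enough to drive a Schur test in the full range $p \in (2, \infty)$. Since $\opL$ has no spectral gap and the Cheeger isoperimetric inequality fails on $(T,\mu)$, neither exponential-in-time decay of $e^{-t\opL}$ nor isoperimetric techniques are available to control the global behaviour of the kernel. The proof must instead exploit precisely the cancellation provided by $\nabla$ applied to the heat kernel in the first variable, which turns out to offset exactly the exponential volume growth of balls; it is this fine balance that also explains the failure of the endpoint at $p = \infty$.
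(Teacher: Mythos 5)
Your plan rests on pointwise (absolute value) bounds for $K_{\Rz}$ plus a Schur test on the pieces determined by whether $y\geq x$, $x\geq y$ or neither, and this is exactly where it breaks down. Using the explicit kernel of $\opL^{-1/2}$ (Corollary \ref{c: negsquareroot}) and writing $\widetilde G(n)\defeq q^{n/2}G(n)=\sum_{k\geq 0}q^{-k}\tilde k^\ZZ(n+2k+1)$, the piece of $\Rz$ with $y\leq x$ has kernel $q^{-\ell(x)}\bigl[\widetilde G(d(x,y))-q^{-1}\widetilde G(d(x,y)+1)\bigr]$, and since $\tilde k^\ZZ$ is positive and decreasing on the positive integers this is comparable, with constant sign, to $\mu(x)^{-1}(1+d(x,y))^{-1}$; equivalently, this piece equals $\sum_{n\geq 0}c_n(\Sigma^*)^n$ with $c_n\asymp (1+n)^{-1}$, $c_n>0$. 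That operator is unbounded on \emph{every} $L^p(\mu)$: testing on $f=\chi_{E_N}$, where $E_N=\bigcup_{m=0}^N\succ^m(o)$, gives values $\gtrsim \log N$ on roughly half of $E_N$, so the $L^p$ operator norm grows like $\log N$. Because the kernel of this piece is nonnegative, no choice of Schur weights (powers of $\mu$ or otherwise), and more generally no argument using only $|K_{\Rz}(x,y)|$, can bound it; your decomposition simply produces an unbounded piece. The gradient $\nabla$ does give genuine decay $d(x,y)^{-2}$ (after removing the factor $q^{-(\ell(x)+\ell(y)+d(x,y))/2}$) in the region $y\not\leq x$, but not on descendants, and the $L^p$-boundedness of $\Rz$ for $p>2$ comes from cancellation \emph{between} the descendant piece and the off-diagonal (``cousin'') piece — cancellation that pointwise estimates cannot see. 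Incidentally, your claim that $\Rz^*$ ``fails to be of weak type $(1,1)$'' is not justified: Proposition \ref{h1l1} only shows $\Rz$ does not map $L^\infty(\mu)$ to $BMO(\mu)$, i.e.\ $\Rz^*$ does not map $H^1(\mu)$ to $L^1(\mu)$, and the weak type $(1,1)$ of $\Rz^*$ is explicitly left open (Remark \ref{r: openproblem}).

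For comparison, the paper avoids bounding $\Rz^*$ (or any one-sided piece) directly: since the range $p\in(1,2]$ is known from \cite{hs}, it suffices to bound the skew-symmetric part $\Rz-\Rz^*$, and by Lemma \ref{lem: radialintskew} together with \eqref{eq: Gdiff} this collapses to $\sum_{n\in\ZZ}\tilde k^\ZZ(n)\,\tilde\Sigma_{-n}$, i.e., after lifting to $\Omega\times\ZZ$, to $\id_\Omega\otimes(\Rz_\ZZ-\Rz_\ZZ^*)$; the $\ell^p(\ZZ)$ bounds for the discrete Hilbert transform — which do exploit the oddness of $\tilde k^\ZZ$, precisely the cancellation your size estimates discard — then transfer to $(T,\mu)$ by Theorem \ref{thm: transference}, and the case $p\in[2,\infty)$ follows by difference. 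If you wish to salvage a kernel-based argument, the cancellation you must quantify is this oddness of $\tilde k^\ZZ$ in the level difference along each boundary ray; a Schur test cannot substitute for it.
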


We also show that $\Rz$ does not map $L^\infty(\mu)$ to $BMO(\mu)$ (see Proposition \ref{h1l1} below). This fact in particular shows that the integral kernel of the adjoint operator $\Rz^*$ does not satisfy the aforementioned integral H\"ormander condition, and therefore new ideas are required in order to prove the boundedness of the Riesz transform on $L^p(\mu)$, $p\in (2,\infty)$.

Our strategy is based on the observation that, since we know from \cite{hs} that $\Rz$ is $L^p$-bounded for $p \in (1,2]$, the $L^p$-boundedness of $\Rz$ for $p \in [2,\infty)$ is equivalent to the $L^p$-boundedness for $p \in (1,\infty)$ of the skew-symmetric part $\Rz-\Rz^*$. As it turns out, the operator $\Rz-\Rz^*$ has a simpler form, which is especially evident when the operator is lifted from the tree $T$ to the product $\Omega \times \ZZ$, where $\Omega=\partial T\setminus\{\myth\}$ is the ``punctured boundary'' of $T$. Indeed, remarkably, the lifted operator has the form $\id_\Omega \otimes (\Rz_\ZZ - \Rz_\ZZ^*)$, where $\Rz_\ZZ$ is the discrete Riesz transform on $\ZZ$, while $\id_\Omega$ is the identity operator on functions on $\Omega$. It is well-known \cite{ADP,HSC} that $\Rz_\ZZ$ is a Calder\'on--Zygmund operator on $\ZZ$, whence one easily deduces that $\id_\Omega \otimes (\Rz_\ZZ - \Rz_\ZZ^*)$ is $L^p$-bounded on $\Omega \times \ZZ$ for $p \in (1,\infty)$, and these strong type bounds transfer to $\Rz - \Rz^*$ too.

As a matter of fact, the same argument also gives the weak type $(1,1)$ boundedness of the lifted operator $\id_\Omega \otimes (\Rz_\ZZ - \Rz_\ZZ^*)$. However, this information per se does not appear to yield a corresponding weak type endpoint result for $p=1$ for the adjoint Riesz transform $\Rz^*$, whose validity remains an open problem.

\medskip

The study of the first-order Riesz transform $\Rz$ associated with the flow Laplacian $\opL$ on the homogeneous tree $T$ can be thought of as a discrete counterpart of the analysis of first-order Riesz transforms associated with a distinguished Laplacian $\opL_G$ on the so-called $ax+b$-groups $G$, developed in \cite{GS, hs, MaVa, Sj, SV2}.
In the latter context the natural gradient $\nabla_G$ is vector-valued, and the operator $\Rz_G = \nabla_G \, \opL_G^{-1/2}$ can be thought of as the \emph{vector of Riesz transforms}, whose components are the (first-order, scalar-valued) Riesz transforms on $G$; more specifically, corresponding to whether the component under consideration is in the direction of $a$ or $b$ in the $ax+b$-group, one speaks of a \emph{vertical} or a \emph{horizontal} Riesz transform on $G$.
We point out that the discrete Riesz transform $\Rz = \nabla \opL^{-1/2}$ on $T$ studied in this paper, despite being scalar-valued, should be thought of as an analogue of the vector of Riesz transforms $\Rz_G$ in the continuous setting, as the flow gradient $\nabla$ is comparable (at least, as far as weak or strong type bounds are concerned) with the ``modulus of the (full) gradient'' on $T$ (see Proposition \ref{p: gradient_equivalence} below).

In the aforementioned works on $ax+b$-groups, the $L^p$-boundedness for $p \in (1,2]$ of the full vector of Riesz transforms $\Rz_G$ was established, together with weak type $(1,1)$ and $H^1\to L^1$ endpoints. However, as far as we know, for $p>2$ the only currently available boundedness result concerns the horizontal Riesz transform on the smallest $ax+b$-group, for which Gaudry and Sj\"ogren in \cite{GS} proved the $L^p$-boundedness for all $p \in (2,\infty)$, as well as the weak type $(1,1)$ boundedness of the adjoint operator. In contrast, no analogous results for vertical Riesz transforms appear to be available, and, a fortiori, the $L^p$-boundedness for $p>2$ of the vector of Riesz transforms $\Rz_G$ appears to be so far an open problem.

This comparison with the continuous setting provides further reasons of interest for our Theorem \ref{riesztr}, as the $L^p$-boundedness result for $p>2$ that we obtain here appears to have no continuous counterpart in the literature on $ax+b$-groups. As it turns out, an approach similar in spirit to the one developed here can be applied to the study of $L^p$-boundedness properties for $p>2$ of Riesz transforms on $ax+b$-groups, eventually yielding that $\Rz_G$ is indeed $L^p$-bounded for all $p \in (1,\infty)$; details on this will appear elsewhere \cite{AMprep}.

\medskip

Motivated by the lack of an endpoint result at $p=1$ for the adjoint Riesz transform $\Rz^*$, and by the study of the horizontal Riesz transforms in the continuous setting, in this paper we also consider another class of Riesz transforms on $T$, which we shall also call \emph{horizontal Riesz transforms}. Specifically, for any given bounded function $\varepsilon : T \to \CC$ with the property that
\[
\sum_{y \in \succ(x)} \varepsilon(y) = 0 \qquad\forall x \in T,
\]
we define the associated \emph{horizontal gradient} $\hnabla$ as
\[
\hnabla f(x) = \frac{1}{q} \sum_{y \in \succ(x)} \varepsilon(y) f(y),
\]
and the corresponding horizontal Riesz transform $\Rz_\varepsilon = \hnabla \, \opL^{-1/2}$. At an intuitive level, one could think of the flow gradient $\nabla$ in \eqref{eq:vgrad} as (discrete) differentiation in the direction of the flow; instead, the horizontal gradient $\hnabla$ differentiates along horocycles, thus somewhat orthogonally to the flow.

This intuition is correct up to a point, as the flow gradient $\nabla$, as already mentioned, is comparable to a ``full'' gradient on $T$ for the purpose of weak or strong bounds; moreover, the step-$2$ differences implicit in the definition of $\hnabla$ (notice that distinct elements of $\succ(x)$ are at distance $2$ from each other) can be controlled by suitable combinations of the step-$1$ differences in $\nabla$. Correspondingly, any weak or strong $(p,p)$ bound for $\Rz$ transfers to $\Rz_\varepsilon$, and the same is true for the respective adjoint operators. While we are not able to determine whether $\Rz^*$ is of weak type $(1,1)$, nevertheless we manage to establish (see Theorem \ref{t: Respilonstar} below) that $\Rz_\varepsilon^*$ is.
Once again, it is not possible to prove this weak type endpoint result by directly using the aforementioned Calder\'on--Zygmund theory on $(T,\mu)$, as $\Rz_\varepsilon^*$ does not map $H^1(\mu)$ into $L^1(\mu)$ whenever $\varepsilon$ is nontrivial (see Proposition \ref{p:horiz_h1l1} below), and an \emph{ad hoc} approach is needed. Indeed, the weak type bound for $\Rz_\varepsilon^*$ can be thought of as a counterpart of the result by Gaudry and Sj\"ogren \cite{GS} for horizontal Riesz transforms in the continuous setting, and our proof is significantly inspired by theirs.

\medskip

It is important to point out that the metric space $(T,\mu)$ is an adverse setting to study the problem. Indeed, in \cite{LSTV}, the authors prove that flow measures fail to satisfy the Cheeger isoperimetric property, and do not satisfy the doubling condition, because they have exponential growth. It is well known that harmonic analysis in nondoubling settings presents major difficulties. In particular, extensions of the theory of singular integrals and of Hardy and BMO spaces have been considered on various metric measure spaces not satisfying the doubling condition, but fulfilling some measure growth assumptions or some geometric conditions, such as the isoperimetric property (see, e.g., \cite{CMM, MMV, NTV, T, To, Tol03, Ve}).

The boundedness of the Riesz transform on graphs has been the object of many investigations in recent years. In \cite{BaRu, fe, Ru2, Ru} the authors obtained various boundedness results for Riesz transforms on graphs satisfying the doubling condition and some additional conditions, expressed either in terms of properties of the measure or estimates for the heat kernel. In \cite{CeMe} Celotto and Meda showed that the Riesz transform associated with the combinatorial Laplacian is bounded from a suitable Hardy type space to $L^1$ on graphs with the Cheeger isoperimetric property. In the recent paper \cite{CCH} the authors obtained the $L^p$-boundedness of Riesz transform for the so-called bounded Laplacians on any weighted graph and any $p\in (1,\infty)$; however, the latter results are proved only under the assumption of positive spectral gap. We remark once again that $(T,\mu)$ is nondoubling and does not satisfy the Cheeger isoperimetric property. Moreover, the flow Laplacian $\opL$, which is a bounded Laplacian in the sense of \cite{CCH}, does not have spectral gap (see Section \ref{sec: lap e riesz}). Hence, none of the above-mentioned results may be applied in our case.

In \cite{LSTV} the Calder\'on--Zygmund theory of \cite{hs}, as well as the Hardy and BMO spaces of \cite{ATV2, ATV1}, were generalized to trees of bounded degree with arbitrary locally doubling flows. While the formulation of the problem for more general trees and flows does not require any additional effort, extending the results presented here to these more general situations seems far from being trivial, mainly because of the lack of explicit formulas for the heat kernel. A different approach is probably needed, and we will possibly tackle this problem in future work.

\medskip

The paper is organized as follows. In Section \ref{sec: lap e riesz} we introduce the flow Laplacian $\opL$, the Riesz transform $\Rz$, and we recall a few properties of the heat kernel of $\opL$, including its relation with the heat kernel on $\ZZ$.
In Section \ref{s: lifting} we discuss the transference result from $\Omega \times \ZZ$ to $T$.
 In Section \ref{s: verticalRiesz} we prove the $L^p$-boundedness of the Riesz transform for $p\in (1,\infty)$ and we show a negative endpoint result for $\Rz$ and $p=\infty$. In Section \ref{s: horizontal} we introduce and study the boundedness of horizontal Riesz transforms.

\medskip

Along the paper, if $A$ and $B$ are two sets, we write $A^B$ to denote the set of all functions from $B$ to $A$.
Moreover, if $A$ is a set, we write $\chi_A$ for the characteristic function of $A$, and we write $\id_A$ for the identity map on the set $\CC^A$ of complex-valued functions on $A$.

\section{The homogeneous tree and the flow Laplacian}\label{sec: lap e riesz}

In this section we collect all the notation and the preliminary results that will be used to study the boundedness of Riesz transforms on the homogeneous tree $T$ with the measure $\mu$.

\subsection{Combinatorial and flow Laplacians}
The combinatorial Laplacian on $T$, which we denote by $\Delta$, is the probabilistic Laplacian associated to the simple nearest neighbour random walk on $T$ and is defined by
\begin{equation*}
  \Delta f(x)=\frac{1}{q+1}\sum_{y\sim x}\bigl(f(x)-f(y)\bigr)  \qquad \forall f\in \CC^T, x\in T;
\end{equation*}
here $\sim$ denotes the neighbouring relation between vertices of $T$.
The operator $\Delta$ is bounded and self-adjoint on $L^2(\#)$, that is, the $L^2$ space on $T$ with respect to the counting measure $\#$.

We denote by $\opL$ the natural Laplacian on $(T,\mu)$, which we call the \emph{flow Laplacian} and is given by
\begin{equation}\label{flowlapdef}
  \opL f(x) = f(x) - \frac{1}{2\sqrt q}\sum_{y\sim x} \frac{\mu(y)^{1/2}}{\mu(x)^{1/2}}f(y) \qquad  \forall f \in \CC^T,x\in T.
\end{equation}
This is precisely the Laplacian on $T$ studied in \cite{hs}. It is easily seen that the flow Laplacian can be expressed in terms of the combinatorial Laplacian as follows:
\begin{equation}\label{eq: relation laplacians}
  \opL= \frac{1}{1-b} \, \mu^{-1/2} \, (\Delta-bI) \, \mu^{1/2},
\end{equation}
where $b= (\sqrt{q}-1)^2/(q+1)$, and $\mu^{1/2}$ and $\mu^{-1/2}$ are thought of as multiplication operators. Clearly $\mu^{1/2} : L^2(\mu) \to L^2(\#)$ is an isomorphism, and it is well known (see for instance \cite{CMS}) that $b$ is the bottom of the spectrum of $\Delta$ on $L^2(\#)$, from which it immediately follows that $\opL$ is self-adjoint on $L^2(\mu)$ and has no spectral gap. Indeed, the spectrum of $\opL$ is precisely $[0,2]$, see \cite[Remark 2.1]{hs}. Equation \eqref{eq: relation laplacians} plays a fundamental role in proving our results, because it allows us to exploit some known formulas for the heat kernel of the combinatorial Laplacian.

Notice that we can write
\begin{equation}\label{eq: LSigma}
\opL=\id_T -(\Sigma+\Sigma^*)/2,
\end{equation}
where $\Sigma : \CC^T \to \CC^T$ is defined by
\[
\Sigma f(x)=f(\pred(x))  \qquad \forall f \in \CC^T, x\in T,
\]
while $\Sigma^*$ is its adjoint with respect to the $L^2(\mu)$-pairing, given by
\[
\Sigma^*f(x) = \frac{1}{q}\sum_{y\in \succ(x)}f(y) \qquad \forall f \in \CC^T, x\in T.
\]
Such operators will often appear in the sequel and we shall summarize some of their properties in the following proposition.

\begin{prop}\label{SigmaSigma*}
The following hold:
\begin{enumerate}[label=(\roman*)]
\item\label{en:sigmasigmaproduct} for all $f,g \in \CC^T$,
\[
\Sigma^* (f \, \Sigma g) = g \, \Sigma^* f;
\]
\item\label{en:sigmasigmaproj} $\Sigma^*\Sigma= \id_T$;
\item\label{en:sigmasigmaiso} for every $p\in [1,\infty]$ the operator $\Sigma$ is an isometric embedding of $L^p(\mu)$ into itself, and also an isometric embedding of $L^{1,\infty}(\mu)$ into itself;
\item\label{en:sigmasigmabound} for every $p\in [1,\infty]$ the operator $\Sigma^*$ is bounded on $L^p(\mu)$ with norm $1$ and it is bounded on $L^{1,\infty}(\mu)$ with norm at most $q$.
\end{enumerate}
\end{prop}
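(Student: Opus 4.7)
The plan is to verify all four items by direct computation from the definitions of $\Sigma$ and $\Sigma^*$, leaning on two structural facts specific to the canonical setting: for each $x \in T$ one has $|\succ(x)| = q$ and $\pred(y) = x$ for every $y \in \succ(x)$, while the flow condition \eqref{eq: flow} takes the explicit form $\mu(x) = q\,\mu(y)$ for every $y \in \succ(x)$.

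For \ref{en:sigmasigmaproduct} and \ref{en:sigmasigmaproj}, I would simply unfold the definition:
\[
\Sigma^*(f\,\Sigma g)(x) = \frac{1}{q}\sum_{y \in \succ(x)} f(y)\,g(\pred(y)) = g(x)\, \Sigma^* f(x),
\]
since $\pred(y)=x$ throughout the sum; the identity $\Sigma^*\Sigma f(x) = f(x)$ follows in the same way, the factor $1/q$ cancelling against $|\succ(x)|=q$.

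For \ref{en:sigmasigmaiso}, for $p\in[1,\infty)$ I would reindex by the predecessor:
\[
\|\Sigma f\|_{L^p(\mu)}^p = \sum_{x \in T} |f(\pred(x))|^p \mu(x) = \sum_{z\in T} |f(z)|^p \sum_{y \in \succ(z)} \mu(y) = \|f\|_{L^p(\mu)}^p,
\]
the inner sum equalling $\mu(z)$ by \eqref{eq: flow}; the case $p=\infty$ is trivial (every vertex, having successors, arises as a predecessor). For the weak type statement, $\{|\Sigma f|>\lambda\}$ is the disjoint union $\bigsqcup_{z\in E_\lambda}\succ(z)$ where $E_\lambda\defeq\{|f|>\lambda\}$, and the same flow identity yields $\mu(\{|\Sigma f|>\lambda\})=\mu(E_\lambda)$, so the distribution functions agree.

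For \ref{en:sigmasigmabound}, the $L^\infty$ bound is the trivial averaging estimate. For $p\in[1,\infty)$ I would apply Jensen's inequality to the uniform probability measure on $\succ(x)$ to get $|\Sigma^* f(x)|^p \leq \frac{1}{q}\sum_{y \in \succ(x)} |f(y)|^p$, and then the reindexing $\mu(x)/q = \mu(y)$ collapses the double sum to $\|f\|_{L^p(\mu)}^p$. For the weak type $(1,1)$ bound, the key observation is that if $|\Sigma^* f(x)|>\lambda$ then at least one $y\in\succ(x)$ satisfies $|f(y)|>\lambda$, so $\{|\Sigma^* f|>\lambda\} \subseteq \pred(F_\lambda)$ with $F_\lambda \defeq \{|f|>\lambda\}$; assigning to each $x \in \pred(F_\lambda)$ a single successor $y(x) \in F_\lambda$ (so that the chosen $y(x)$ are distinct) and using $\mu(x)=q\,\mu(y(x))$ gives $\mu(\pred(F_\lambda)) \leq q\,\mu(F_\lambda)$, whence $\lambda\,\mu(\{|\Sigma^* f|>\lambda\}) \leq q\,\|f\|_{L^{1,\infty}(\mu)}$.

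No step is really an obstacle; the only care required is the bookkeeping of the flow identities when swapping sums or pulling back level sets. The most delicate moment is the weak type estimate for $\Sigma^*$, where the factor $q$ appears precisely because up to $q$ distinct vertices of $F_\lambda$ can share a common predecessor and the measure of that predecessor is $q$ times the measure of any one successor; this is the only item where the constant is not $1$, and it is sharp.
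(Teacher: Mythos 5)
Your proof is correct and follows essentially the same route as the paper's: direct computation for \ref{en:sigmasigmaproduct}--\ref{en:sigmasigmaproj}, reindexing via the flow identity $\mu(x)=q\,\mu(y)$ for $y\in\succ(x)$ in \ref{en:sigmasigmaiso}, and for the weak type bound on $\Sigma^*$ the same key observation that $|\Sigma^* f(x)|>\lambda$ forces $|f(y)|>\lambda$ for some $y\in\succ(x)$, with the factor $q$ arising exactly as in the paper from $\mu(x)=q\,\mu(y)$ together with the disjointness of successor sets. The only cosmetic differences are that you use Jensen's inequality where the paper deduces the $L^p$ bound for $\Sigma^*$ by duality from \ref{en:sigmasigmaiso} (your argument gives norm at most $1$; that the norm equals $1$ follows at once from $\Sigma^*\Sigma=\id_T$ and the isometry of $\Sigma$), and your closing remark that the constant $q$ is sharp is an unproved aside not needed for, nor claimed in, the statement.
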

\begin{proof}
For all $f,g \in \CC^T$ and $x \in T$,
\[
\Sigma^*(f \Sigma g)(x)=\frac{1}{q}\sum_{y\in \succ(x)} f(y) \Sigma g(y)=\frac{1}{q} \sum_{y\in \succ(x)} f(y) g(x) = g(x) \Sigma^* f(x),
\]
which proves part \ref{en:sigmasigmaproduct}. Taking $f \equiv 1$ in the previous identity yields part \ref{en:sigmasigmaproj}.

Consider now $p\in [1,\infty)$ and $f \in \CC^T$. Then
\[
\|\Sigma f\|^p_{L^p(\mu)} = \sum_{x\in T}|f(\pred(x))|^p \, q^{\ell(x)} = \sum_{x\in T}|f(\pred(x))|^p \, q^{\ell(\pred(x))-1} = \| f\|^p_{L^p(\mu)},
\]
proving the $L^p(\mu)$ result of part \ref{en:sigmasigmaiso} in the case where $p \in [1,\infty)$; the remaining case $p=\infty$ is analogous and easier. Furthermore, as $\mu$ is a flow measure, for all $\lambda > 0$,
\[
\mu \{ |\Sigma f| > \lambda \} = \mu (\pred^{-1}(\{ |f| > \lambda \})) = \mu \{ |f| > \lambda \},
\]
which proves that $\| \Sigma f\|_{L^{1,\infty}(\mu)} = \| f\|_{L^{1,\infty}(\mu)}$.

The $L^p$-boundedness statement in part \ref{en:sigmasigmabound} follows by part \ref{en:sigmasigmaiso} and duality. It remains to show the $L^{1,\infty}$-boundedness.
Given $\lambda >0$ and $f$ in $\CC^T$ we have that
\[\begin{split}
  \{x\in T \colon |\Sigma^*f(x)|>\lambda\}
		&= \biggl\{x\in T \colon \biggl|\sum_{y \in \succ(x)}f(y)\biggr|>q\lambda\biggr\} \\
		&\subseteq \{x\in T \colon \max_{y \in \succ(x)}|f(y)|>\lambda\} \\
		&=\bigcup_{j=1}^q \{x \colon |f(\succ_j(x))|>\lambda\},
\end{split}\]
where $\succ_j(x)$, $j=1,\dots,q$, is an enumeration of $\succ(x)$. It follows that
\[
  \mu( \{|\Sigma^*f|>\lambda\})\le \sum_{j=1}^q \mu \{|f \circ \succ_j|>\lambda\}
		\le {q} \mu\{|f|>\lambda\}\le q \frac{\|f\|_{L^{1,\infty}(\mu)}}{\lambda},
\]
because
\[
\mu \{|f \circ \succ_j|>\lambda\} = \sum_{x \in T} q^{\ell(x)} \chi_{\{|f(\succ_j(x))|>\lambda\}} = q\sum_{x \in T}q^{\ell(\succ_j(x))} \chi_{\{|f(\succ_j(x))|>\lambda\}}.
\]
Hence $\Sigma^*$ is bounded on $L^{1,\infty}(\mu)$ with norm at most $q$.
\end{proof}

\subsection{Gradient and Riesz transform}

The definition of Riesz transform depends on a notion of gradient on graphs, which is not unambiguous in the literature. Many authors, including Hebisch and Steger in \cite{hs}, define the ``modulus of the gradient'' of a function $f \in \CC^T$ as the vertex function
\begin{equation*}
  D f(x)=\sum_{y \sim x}|f(x)-f(y)| \qquad \forall x \in T,
\end{equation*}
and consequently the ``modulus of the Riesz transform'' as the sublinear operator $D \opL^{-1/2}$; as usual, fractional powers of the Laplacian are defined by means of the Spectral Theorem.

Here we find it natural and convenient to define the \emph{flow gradient} on $T$ as
\begin{equation*}
 \nabla f(x)= (\id_T -\Sigma) f(x) = f(x) - f(\pred(x)) \qquad \forall f \in \CC^T, x\in T.
\end{equation*}
Note that, by \eqref{eq: LSigma} and Proposition \ref{SigmaSigma*} \ref{en:sigmasigmaproj},
\[
\nabla^* \, \nabla = (\id_T-\Sigma^*) (\id_T-\Sigma) = 2 \opL,
\]
thus the flow gradient $\nabla$ is naturally associated with the flow Laplacian $\opL$, in that it allows one to write the latter in ``divergence form''.
We then define the Riesz transform on $(T,\mu)$ as the linear operator
\begin{equation*}
  \Rz f(x) = \nabla \opL^{-1/2} f(x) = \opL^{-1/2}f(x) - \opL^{-1/2} f(\pred(x)) \qquad \forall f \in \CC^T, x \in T.
\end{equation*}

We now show that the relevant boundedness properties of $\Rz$ are equivalent to those of the operator $D\opL^{-1/2}$ studied in \cite{hs}. On the basis of the following statement, we will be allowed to use the boundedness results from \cite{hs} for the modulus of the Riesz transform on $(T,\mu)$ as applying to $\Rz$ too.

\begin{prop}\label{p: gradient_equivalence}
For every $p \in [1,\infty]$,
\begin{equation*}
       \|\nabla f\|_{L^p(\mu)} \leq \| D f \|_{L^p(\mu)} \leq (1+q) \|\nabla f\|_{L^p(\mu)},
\end{equation*}
and
\begin{equation*}
      \| \nabla f \|_{L^{1,\infty}(\mu)} \leq \| D f \|_{L^{1,\infty}(\mu)} \leq (1+q)^2 \| \nabla f \|_{L^{1,\infty}(\mu)}.
\end{equation*}
\end{prop}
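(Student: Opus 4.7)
The plan is to reduce $Df$ to a single pointwise identity involving $|\nabla f|$ and $\Sigma^*|\nabla f|$, and then invoke the boundedness properties of $\Sigma^*$ already established in Proposition \ref{SigmaSigma*}. The $q+1$ neighbours of $x$ are $\pred(x)$ together with the $q$ elements of $\succ(x)$, and for each $y \in \succ(x)$ one has $\pred(y) = x$, so that $|f(x)-f(y)| = |\nabla f(y)|$. Summing all these contributions gives the identity
\[
Df(x) = |\nabla f(x)| + \sum_{y \in \succ(x)} |\nabla f(y)| = |\nabla f(x)| + q\,\Sigma^*|\nabla f|(x).
\]

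The lower bounds in both $L^p(\mu)$ and $L^{1,\infty}(\mu)$ follow immediately from the pointwise inequality $|\nabla f(x)| \leq Df(x)$, which is visible in the displayed identity since the second summand is nonnegative.

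For the $L^p$ upper bound I would apply the triangle inequality in $L^p(\mu)$ to the above identity and then invoke Proposition \ref{SigmaSigma*}\ref{en:sigmasigmabound}, which gives $\|\Sigma^*|\nabla f|\|_{L^p(\mu)} \leq \|\nabla f\|_{L^p(\mu)}$; this yields the constant $1+q$ at once, uniformly in $p \in [1,\infty]$.

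The only delicate point is the weak-type estimate. A direct application of the quasi-triangle inequality for $L^{1,\infty}(\mu)$ with the standard constant $2$ would give only a bound of order $2(1+q^2)$, which is strictly worse than $(1+q)^2$ as soon as $q \geq 2$, so a finer argument is required. I would introduce a parameter $\alpha \in (0,1)$ and use the inclusion
\[
\{Df > \lambda\} \subseteq \{|\nabla f| > \alpha\lambda\} \cup \{q\Sigma^*|\nabla f| > (1-\alpha)\lambda\},
\]
together with the weak-type bound $\|\Sigma^*\|_{L^{1,\infty} \to L^{1,\infty}} \leq q$ from Proposition \ref{SigmaSigma*}\ref{en:sigmasigmabound}, to obtain
\[
\lambda\,\mu(\{Df > \lambda\}) \leq \left(\frac{1}{\alpha} + \frac{q^2}{1-\alpha}\right)\|\nabla f\|_{L^{1,\infty}(\mu)}.
\]
Minimizing the parenthesis in $\alpha$ (the optimum is $\alpha = 1/(1+q)$) produces exactly $(1+q)^2$. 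An equivalent, more combinatorial route to the same constant is to first verify that $\mu(\pred(A)) \leq q\,\mu(A)$ for every $A \subseteq T$ (a direct consequence of the flow property $\mu(\pred(y)) = q\,\mu(y)$) and then to split $\{Df > \lambda\}$ according to which of the $q+1$ nonnegative summands $|\nabla f(x)|, |\nabla f(y_1)|, \ldots, |\nabla f(y_q)|$ exceeds the threshold $\lambda/(q+1)$, the contribution of the successor terms being controlled by a $\pred$-preimage. In either presentation, picking the right splitting is the only real obstacle; the rest of the argument is a one-line consequence of the pointwise identity.
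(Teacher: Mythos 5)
Your proof is correct and produces exactly the constants $(1+q)$ and $(1+q)^2$; the pointwise identity $Df=|\nabla f|+q\,\Sigma^*|\nabla f|$ and the threshold optimization are both sound, and there is no circularity since Proposition \ref{SigmaSigma*} is proved independently. The route differs from the paper's mainly in packaging: you factor everything through the mapping properties of $\Sigma^*$ from Proposition \ref{SigmaSigma*}\ref{en:sigmasigmabound} — Minkowski's inequality plus $\|\Sigma^*\|_{L^p\to L^p}=1$ for the strong bounds (which treats $p=\infty$ with no separate case), and a two-set splitting with $\|\Sigma^*\|_{L^{1,\infty}\to L^{1,\infty}}\le q$ optimized at $\alpha=1/(1+q)$, giving $(1+q)+q(1+q)=(1+q)^2$, for the weak bound. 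The paper instead argues directly: for $p<\infty$ it uses the discrete power-mean inequality $\bigl(\sum_{i=1}^{q+1}a_i\bigr)^p\le(q+1)^{p-1}\sum_i a_i^p$ and re-sums using $\mu(x)=q\mu(y)$ for $y\in\succ(x)$, and for the weak bound it splits $\{Df>\lambda\}$ into $\{|\nabla f|>\lambda/(q+1)\}$ and the set of $x$ having a successor $y$ with $|\nabla f(y)|>\lambda/(q+1)$, controlling the latter as a $\pred$-preimage of measure at most $q$ times that of the superlevel set — which is precisely the ``more combinatorial route'' you sketch at the end, and is in essence the same estimate used to prove the weak $(1,1)$ bound for $\Sigma^*$ in Proposition \ref{SigmaSigma*}, only applied at the level of sets rather than operators. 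Your main presentation buys economy and uniformity in $p$; the paper's buys self-containedness, with your explicit optimization hidden in its choice of the equal threshold $\lambda/(q+1)$.
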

\begin{proof}
To prove the above statement for $L^p$ norms, recall that $\mu(x)=q\mu(y)$ if $y\in \succ(x)$; so, when $p < \infty$,
\begin{equation*}
    \begin{split}
        \Vert \nabla f\Vert_{L^p(\mu)}^p
				&\le \| D f\|_{L^p(\mu)}^p \le (1+q)^{p-1} \sum_{x \in T} \sum_{y \sim x} |f(x)-f(y)|^p \,\mu(x) \\
    &=(1+q)^{p-1} \sum_{x \in T} \biggl( |f(x)-f(\pred(x))|^p \,\mu(x) + q \sum_{y \in \succ(x)} |f(x)-f(y)|^p \,\mu(y) \biggr)\\
    &=(1+q)^p  \Vert \nabla f\Vert_{L^p(\mu)}^p.
    \end{split}
\end{equation*}
The case $p=\infty$ is analogous and easier.

Finally, on the one hand it is clear that $\Vert \nabla f\Vert_{L^{1,\infty}(\mu)}\leq \Vert D f\Vert_{L^{1,\infty}(\mu)}$. On the other hand, for any $\lambda >0$,
\begin{multline*}
    \lbrace x \colon |D f(x)|>\lambda\rbrace \\
		\subseteq \biggl\lbrace x \colon |\nabla f(x)| > \frac{\lambda}{q+1} \biggr\rbrace \cup \biggl\lbrace x \colon \exists y \in \succ(x) \colon |f(x)-f(y)|> \frac{\lambda}{q+1} \biggr\rbrace,
\end{multline*}
from which it follows that
\begin{equation*}
    \lambda\mu(\lbrace x \colon |D f(x)| > \lambda \rbrace) \leq (q+1)^2 \Vert \nabla f\Vert_{L^{1,\infty}(\mu)},
\end{equation*}
as required.
\end{proof}

\subsection{Laplacian and Riesz transform on \texorpdfstring{$\ZZ$}{Z}}\label{sec: prelZ}

Let $\Delta_{\ZZ}$ denote the discrete Laplacian on $\ZZ$, namely,
\begin{align*}
    \Delta_{\ZZ} F(n) = F(n) - \frac{F(n+1)+F(n-1)}{2} \qquad \forall n \in \ZZ,
\end{align*} for every $F$ in $\CC^{\ZZ}$. Observe that $\Delta_{\ZZ} = \id_\ZZ - (\tau_1+\tau_{-1})/2$, where $\tau_k F(n) = F(n-k)$ is the translation by $k \in \ZZ$. We also introduce the discrete (step-1) gradient $\nabla_{\ZZ} = \id_\ZZ - \tau_{-1}$ and the associated Riesz transform on $\ZZ$ (also known as the ``discrete Hilbert transform''), formally defined as $\Rz_\ZZ = \nabla_{\ZZ} \, \Delta_{\ZZ}^{-1/2}$. We point out that
\[
\nabla_{\ZZ}^* = \id_\ZZ - \tau_1 = -\tau_1 \nabla_\ZZ
\]
and
\begin{equation}\label{eq:laplacianZZ}
\Delta_\ZZ = \frac{1}{2} \nabla_\ZZ^* \nabla_\ZZ = \frac{1}{2} \left( \nabla_\ZZ + \nabla_\ZZ^* \right).
\end{equation}

Many of the above identities are analogous to the ones obtained above for the flow Laplacian and the vertical gradient on $T$; this is natural, as $\ZZ$ can be thought of as the homogeneous tree $T_q$ with $q=1$. A crucial difference between the case $q=1$ considered here and the case $q\geq 2$ discussed above is that the translation operator $\tau_1$ on $\ZZ$ is invertible, with inverse $\tau_{-1}$, and in particular $\tau_1$ and $\tau_{-1}$ commute; the same does not hold for the operators $\Sigma$ and $\Sigma^*$ on $T = T_q$ for $q \geq 2$. More generally, all the operators that we introduced on $\ZZ$ ($\Delta_\ZZ$, $\nabla_\ZZ$, $\Rz_\ZZ$, and their adjoints) are translation-invariant and (due to the commutativity of $\ZZ$) commute pairwise.

Let us now consider the skew-symmetric part $\widetilde \Rz_\ZZ$ of the Riesz transform $\Rz_\ZZ$, namely,
\[
\widetilde \Rz_\ZZ = \Rz_\ZZ - \Rz_\ZZ^* = \widetilde\nabla_\ZZ \, \Delta_\ZZ^{-1/2},
\]
where
\[
\widetilde\nabla_\ZZ = \tau_1 - \tau_{-1};
\]
in other words, $\widetilde \Rz_\ZZ$ can be also thought of as a first-order Riesz transform on $\ZZ$, associated to the skew-symmetric step-$2$ gradient $\widetilde \nabla_\ZZ$.

We record here some useful properties of $\Rz_\ZZ$ and $\widetilde \Rz_\ZZ$.

\begin{prop}\label{p: rieszZZ}
Let $k_\ZZ$ and $\tilde k_\ZZ$ be the convolution kernels of $\Rz_\ZZ$ and $\widetilde \Rz_\ZZ$. Then
\begin{equation}\label{eq:rieszZZformulas}
k_\ZZ(n) = \frac{\sqrt{2}}{\pi} \frac{1}{n+1/2}, \qquad \tilde k_\ZZ(n) = \frac{2\sqrt{2}}{\pi} \frac{n}{n^2-1/4}
\end{equation}
for all $n \in \ZZ$. In particular, $k_\ZZ$ and $\tilde k_\ZZ$ are Calder\'on--Zygmund kernels, i.e., they satisfy the estimates
\begin{align}
\label{CZk}
  |k^{\ZZ}(n)| &\leq C (1+|n|)^{-1}, & |\nabla_{\ZZ}k^{\ZZ}(n)| &\leq C (1+|n|)^{-2}, \\
\label{CZktilde}
  |\tilde{k}^{\ZZ}(n)| &\leq C (1+|n|)^{-1}, & |\nabla_{\ZZ}\tilde{k}^{\ZZ}(n)| &\leq C (1+|n|)^{-2}
\end{align}
for some constant $C \in (0,\infty)$ and all $n \in \ZZ$. Moreover, the operators $\Rz_\ZZ$ and $\widetilde \Rz_\ZZ$ and their adjoints are of weak type $(1,1)$ and bounded on $\ell^p(\ZZ)$ for all $p \in (1,\infty)$.
\end{prop}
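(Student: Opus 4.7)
The plan is to reduce the whole statement to a straightforward computation via Fourier analysis on $\ZZ$, i.e.\ on the torus $\mathbb{T} = \RR/2\pi\ZZ$, normalising so that $\widehat{F}(\theta) = \sum_{n} F(n) e^{-in\theta}$. The symbols of the relevant translation-invariant operators are then read off from the definitions: $\Delta_\ZZ$ has symbol $1 - \cos\theta = 2\sin^2(\theta/2)$, while $\nabla_\ZZ = \id_\ZZ - \tau_{-1}$ and $\widetilde\nabla_\ZZ = \tau_1 - \tau_{-1}$ have symbols $1 - e^{i\theta}$ and $-2i\sin\theta$. Using the factorisation $1-e^{i\theta} = -2ie^{i\theta/2}\sin(\theta/2)$, one finds that $\Rz_\ZZ$ has symbol $-i\sqrt{2}\,\sgn(\theta)\,e^{i\theta/2}$ on $(-\pi,\pi)$, while $\widetilde\Rz_\ZZ$ has symbol $-2\sqrt{2}\,i\,\sgn(\theta)\cos(\theta/2)$; in particular, both symbols are bounded, so that $\Rz_\ZZ$ and $\widetilde\Rz_\ZZ$ are bounded on $\ell^2(\ZZ)$ by Plancherel.

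Inverting these symbols term by term gives the claimed formulas \eqref{eq:rieszZZformulas}. Concretely,
\[
k_\ZZ(n) = \frac{-i\sqrt{2}}{2\pi}\int_{-\pi}^{\pi}\sgn(\theta)\,e^{i(n+1/2)\theta}\,d\theta
= \frac{\sqrt{2}}{\pi}\cdot\frac{1-\cos((n+1/2)\pi)}{n+1/2},
\]
and $\cos((n+1/2)\pi) = 0$ yields $k_\ZZ(n) = \sqrt{2}/(\pi(n+1/2))$. For $\tilde k_\ZZ$, the product-to-sum identity $2\cos(\theta/2)\sin(n\theta) = \sin((n+1/2)\theta) + \sin((n-1/2)\theta)$ reduces the integral to two copies of the previous computation and gives $\tilde k_\ZZ(n) = (2\sqrt{2}/\pi)\cdot n/(n^2 - 1/4)$. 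The size bounds in \eqref{CZk}--\eqref{CZktilde} are then immediate, while the smoothness estimates $|\nabla_\ZZ k_\ZZ(n)|, |\nabla_\ZZ \tilde k_\ZZ(n)| \lesssim (1+|n|)^{-2}$ follow by elementary algebra from the explicit formulas (or by Mean Value Theorem applied to $1/(x+1/2)$ and $x/(x^2-1/4)$).

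For the boundedness statement, the $\ell^2$-boundedness noted above together with the Calder\'on--Zygmund estimates \eqref{CZk}--\eqref{CZktilde} place $\Rz_\ZZ$ and $\widetilde\Rz_\ZZ$ in the scope of the standard Calder\'on--Zygmund theorem on the doubling metric measure space $(\ZZ,\#)$, giving the weak type $(1,1)$ bound and, by interpolation, $\ell^p$-boundedness for $p\in(1,2]$. To transfer these to the adjoints, one observes that $\widetilde\nabla_\ZZ^* = -\widetilde\nabla_\ZZ$ and $\Delta_\ZZ = \Delta_\ZZ^*$ imply $\widetilde\Rz_\ZZ^* = -\widetilde\Rz_\ZZ$, while $\nabla_\ZZ^* = -\tau_1 \nabla_\ZZ$ and the commutativity of $\tau_1$ with $\Delta_\ZZ^{-1/2}$ give $\Rz_\ZZ^* = -\tau_1\Rz_\ZZ$; since $\tau_1$ is an isometry on every $\ell^p(\ZZ)$ and on $\ell^{1,\infty}(\ZZ)$, the adjoints satisfy the same bounds. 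Finally, duality extends the $\ell^p$-boundedness from $p\in(1,2]$ to all $p\in(1,\infty)$.

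There is really no conceptual obstacle here; the statement is a purely one-dimensional and translation-invariant fact, and the only mildly delicate point is the explicit evaluation of the Fourier integrals, which succeeds because the symbols, despite their discontinuity at $\theta=0$, are essentially a $\sgn$ multiplied by a smooth factor of modulus one.
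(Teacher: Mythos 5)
Your proposal is correct, but it proceeds along a genuinely different route from the paper. The paper's proof is essentially by citation: the explicit formula for $k_\ZZ$ is quoted from Arcozzi--Domelevo--Petermichl (noting that $\Rz_\ZZ = -\sqrt{2}\,\mathcal{H}^+$ in their notation), the Calder\'on--Zygmund estimates are then read off from that formula (or deduced from the general theory of Hebisch--Saloff-Coste for groups of polynomial growth, which also supplies the weak type $(1,1)$ and $\ell^p$ bounds), and the results for $\widetilde\Rz_\ZZ$ follow from the relation $\tilde k_\ZZ(n) = k_\ZZ(n) - k_\ZZ(-n)$. You instead derive everything from scratch as a Fourier multiplier computation on the torus: your symbols for $\Delta_\ZZ$, $\nabla_\ZZ$, $\widetilde\nabla_\ZZ$ and the resulting multipliers $-i\sqrt{2}\,\sgn(\theta)e^{i\theta/2}$ and $-2\sqrt{2}\,i\,\sgn(\theta)\cos(\theta/2)$ are correct, the inversion integrals do give \eqref{eq:rieszZZformulas} (the $\cos((n+1/2)\pi)=0$ observation and the product-to-sum reduction both check out), the size and first-difference estimates follow at once from the explicit formulas, and your treatment of the adjoints via $\widetilde\Rz_\ZZ^* = -\widetilde\Rz_\ZZ$ and $\Rz_\ZZ^* = -\tau_1\Rz_\ZZ$ (with $\tau_1$ an isometry on every $\ell^p$ and on $\ell^{1,\infty}$) is a clean way to transfer all bounds, arguably more explicit than the paper's appeal to the general theory. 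What your approach buys is self-containedness and an independent verification of the constants in \eqref{eq:rieszZZformulas}; what the paper's approach buys is brevity and a pointer to a framework (finitely generated abelian groups, polynomial growth) in which such results hold in much greater generality. The only mildly delicate point in your argument, namely interpreting $\Delta_\ZZ^{-1/2}$ through its symbol despite the singularity at $\theta=0$, is handled correctly since the singularity cancels in the composed symbol, which is bounded.
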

\begin{proof}
The explicit formula for $k_\ZZ$ can be found in \cite[Proposition 1]{ADP}; note that our $\Rz_\ZZ$ coincides with $-\sqrt{2} \, \mathcal{H}^+$ in the notation of \cite{ADP}. From the explicit formula, the estimates \eqref{CZk} are immediately verified; alternatively, as $\ZZ$ is commutative and finitely generated (thus of polynomial growth), one can invoke the more general theory of \cite[pp.\ 695--696]{HSC}, which also discusses weak type $(1,1)$ and $L^p$-boundedness properties. As $\widetilde \Rz_\ZZ = \Rz_\ZZ - \Rz_\ZZ^*$ and
\[
\tilde k_\ZZ(n) = k_\ZZ(n) - k_\ZZ(-n),
\]
the analogous results for $\widetilde \Rz_\ZZ$ follow.
\end{proof}

\subsection{Heat kernels on \texorpdfstring{$\ZZ$}{Z} and \texorpdfstring{$T$}{T}}\label{s: heat}

By translation-invariance, the heat semigroup $e^{-t\Delta_\ZZ}$ on $\ZZ$ is a convolution operator; we shall denote by $h^\ZZ_t$ ($t>0$) the corresponding convolution kernel on $\ZZ$.

We now move to the homogeneous tree $T$. Let $e^{- t \Delta}$ and $e^{- t \opL}$ be the heat semigroups of the combinatorial Laplacian $\Delta$ and of the flow Laplacian $\opL$ on $T$, respectively. We shall denote by $h_t$ and $H_t$ the associated heat kernels on the respective measure spaces on which the generators are self-adjoint and bounded, i.e.,
\begin{equation}\label{eq:heatkernelsT}
  e^{-t\Delta} f(x) = \sum_{y \in T} h_t(x,y) f(y), \quad e^{-t\opL} f(x) = \sum_{y \in T} H_t(x,y) f(y) \mu(y).
\end{equation}
By the Spectral Theorem and \eqref{eq: relation laplacians}, we obtain the following relation between the combinatorial and the flow semigroups,
\begin{equation}\label{eq: htHt}
  e^{-t\opL} = \mu^{-1/2} e^{b t/(1-b)} e^{-t\Delta/{(1-b)}} \mu^{1/2}.
\end{equation}
By means of this relation, we can deduce the following formula for $H_t$ from an analogous known formula for $h_t$.

\begin{prop}\label{p: heatkernelT}
For all $t>0$ and $x,y \in T$,
\begin{equation}\label{eq: H=qJ}
H_t(x,y) = q^{-\ell(x)/2} \, J_t(d(x,y)) \, q^{-\ell(y)/2},
\end{equation}
where, for all $n \in \NN$,
\[
J_t(n) = \sum_{k=0}^{\infty} q^{-(n+2k)/2} \, \widetilde{\nabla}_{\ZZ} h^{\ZZ}_t(n+2k+1).
\]
\end{prop}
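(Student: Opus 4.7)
The plan is to first use the conjugation identity \eqref{eq: htHt} to reduce the proposition to a formula for the combinatorial heat kernel $h_t$ on $T$, and then identify that formula by showing both sides of the reduced identity solve the same initial value problem on $\NN$.

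For the reduction, applying \eqref{eq: htHt} and comparing with the definitions \eqref{eq:heatkernelsT}, one obtains, using $\mu(x) = q^{\ell(x)}$,
\[
H_t(x,y) = q^{-\ell(x)/2} \, q^{-\ell(y)/2} \, e^{bt/(1-b)} \, h_{t/(1-b)}(x,y).
\]
Since $T$ is homogeneous, $h_\tau(x,y)$ depends only on $n \defeq d(x,y)$; write $h_\tau(x,y) = \widetilde h_\tau(n)$. Then \eqref{eq: H=qJ} reduces to the identity
\[
e^{b\tau} \widetilde h_\tau(n) = J_{\tau(1-b)}(n) \qquad \forall\, \tau>0,\ n \in \NN.
\]

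To prove this, I would set $F_\tau(n) \defeq e^{b\tau}\widetilde h_\tau(n)$ and $G_\tau(n) \defeq J_{\tau(1-b)}(n)$, and verify that both satisfy the same linear ODE on $\NN$ with the same initial datum. From $\partial_\tau \widetilde h_\tau = -\Delta\widetilde h_\tau$ and the action of $\Delta$ on radial functions on $T_q$, one finds
\[
\partial_\tau F_\tau(n) = -(1-b) F_\tau(n) + \frac{1}{q+1} F_\tau(n-1) + \frac{q}{q+1} F_\tau(n+1) \quad (n \geq 1),
\]
with $\partial_\tau F_\tau(0) = -(1-b)F_\tau(0) + F_\tau(1)$ and $F_0(n) = \delta_{n,0}$. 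For $G_\tau$, I would differentiate under the sum using $\partial_\tau h^\ZZ_{\tau(1-b)} = -(1-b)\Delta_\ZZ h^\ZZ_{\tau(1-b)}$, commute $\widetilde\nabla_\ZZ$ with $\Delta_\ZZ$, and reindex: a substitution $k \mapsto k-1$ shows $\sum_k q^{-(n+2k)/2}\widetilde\nabla_\ZZ h^\ZZ_{\tau(1-b)}(n+2k+2) = q^{1/2} G_\tau(n+1)$, while the remaining sum equals $q^{-1/2} G_\tau(n-1)$. The algebraic identity that converts the symmetric coefficients $1/2$ of $\Delta_\ZZ$ into the asymmetric coefficients $(q+1)^{-1}$ and $q(q+1)^{-1}$ is $(1-b)(q+1) = 2\sqrt q$, which is immediate from $b = (\sqrt q - 1)^2/(q+1)$; this yields the matching ODE for $G_\tau$ at each $n \geq 1$.

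The boundary case $n = 0$ additionally requires $\widetilde\nabla_\ZZ h^\ZZ_\tau(0) = 0$, which holds since $h^\ZZ_\tau$ is even, in order to absorb the would-be $G_\tau(-1)$ term and recover $\partial_\tau G_\tau(0) = -(1-b) G_\tau(0) + G_\tau(1)$. The initial condition $G_0(n) = \delta_{n,0}$ follows from $h^\ZZ_0(m) = \delta_{m,0}$, which gives $\widetilde\nabla_\ZZ h^\ZZ_0(n+2k+1) = \delta_{n+2k,0}$ for $n+2k+1 \geq 1$. Uniqueness of bounded solutions to this linear ODE system, whose generator is bounded on $\ell^\infty(\NN)$, then yields $F_\tau = G_\tau$, and the proposition follows. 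The main obstacle is the bookkeeping in the series reindexing: one must carefully track factors of $q^{\pm 1/2}$ and apply the identity $(1-b)(q+1) = 2\sqrt q$ at precisely the right places, with a separate argument at the boundary $n=0$ relying on the evenness of $h^\ZZ_\tau$.
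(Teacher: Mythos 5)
Your proposal is correct, but it takes a genuinely different route from the paper. The paper's proof is essentially a two-line deduction: it quotes the known radial formula for the combinatorial heat kernel $h_t$ on $T_q$ from Cowling--Meda--Setti (\cite[Proposition 2.5]{CMS}), namely $h_t(x,y) = e^{-bt} q^{-d(x,y)/2} \sum_{k\ge 0} q^{-k}\,\widetilde\nabla_\ZZ h^\ZZ_{t(1-b)}(d(x,y)+2k+1)$, and then obtains \eqref{eq: H=qJ} from the conjugation identity \eqref{eq: htHt} exactly as in your reduction step. You perform the same reduction, but instead of invoking \cite{CMS} you re-derive that formula (in conjugated form) from scratch, by checking that $e^{b\tau}\widetilde h_\tau(n)$ and $J_{\tau(1-b)}(n)$ solve the same radial evolution equation on $\NN$ with datum $\delta_{n,0}$ and appealing to uniqueness for a bounded generator on $\ell^\infty(\NN)$. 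I verified the key computations: the radial action of $\Delta$ gives the coefficients $\tfrac{1}{q+1}$, $\tfrac{q}{q+1}$ for $n\ge 1$ and $\partial_\tau F_\tau(0)=-(1-b)F_\tau(0)+F_\tau(1)$ at $n=0$; your identities $\sum_k q^{-(n+2k)/2} v(n+2k+2)=q^{1/2}G_\tau(n+1)$ and $\sum_k q^{-(n+2k)/2} v(n+2k)=q^{-1/2}G_\tau(n-1)$ hold (in fact no reindexing $k\mapsto k-1$ is even needed); the matching of coefficients via $(1-b)(q+1)=2\sqrt q$ is right; and the boundary case $n=0$ does work thanks to $\widetilde\nabla_\ZZ h^\ZZ_s(0)=0$ by evenness, yielding $\tfrac{1-b}{2}(q^{1/2}+q^{-1/2})=1$. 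What each approach buys: the paper's argument is short and delegates the analytic content to the literature; yours is self-contained (it essentially reproves the needed case of \cite[Proposition 2.5]{CMS}) at the cost of supplying routine justifications you only gesture at --- radiality of $h_t$ via distance-transitivity of $T_q$, differentiation under the $k$-sum (easy, since $h^\ZZ_t$ and $\Delta_\ZZ h^\ZZ_t$ are uniformly bounded and the weights decay geometrically), and the uniqueness step, which requires noting that both candidate solutions are bounded in $n$ locally uniformly in $\tau$ and continuous in $\tau$, so a Gronwall/Picard argument applies. None of these is a gap, but they should be spelled out in a complete write-up.
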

\begin{proof}
From \cite[Proposition 2.5]{CMS} we know that
\begin{equation}\label{eq: heat_combinatorial}
h_t(x,y) = e^{-bt} q^{-d(x,y)/2} \sum_{k=0}^\infty q^{-k} \, \widetilde\nabla_\ZZ h^{\ZZ}_{t(1-b)}(d(x,y)+2k+1)),
\end{equation}
and the desired formula for $H_t$ easily follows from \eqref{eq:heatkernelsT} and \eqref{eq: htHt}.
\end{proof}

An important feature of the heat kernel formula \eqref{eq: H=qJ} is the fact that, apart from the factor $q^{-(\ell(x)+\ell(y))/2}$, the expression for $H_t(x,y)$ only depends on the distance $d(x,y)$ between the vertices $x$ and $y$; this ``almost-radiality'' of the heat kernel $H_t$ of the flow Laplacian is a counterpart of the radiality of the heat kernel $h_t$ of the combinatorial Laplacian given in \eqref{eq: heat_combinatorial}, which in turn is a consequence of the homogeneity of $T$.

Another crucial feature of the formula in Proposition \ref{p: heatkernelT} is the fact that it relates the heat kernels $H_t$ on $T$ and $h_t^\ZZ$ on $\ZZ$. As a consequence, by subordination, we can deduce an expression for the integral kernel of $\opL^{-1/2}$, relating it to the convolution kernel $\tilde k^\ZZ$ of the skew-symmetric part $\widetilde \Rz_\ZZ$ of the Riesz transform on $\ZZ$.

\begin{corollary}\label{c: negsquareroot}
The integral kernel of $\opL^{-1/2}$ has the form
\begin{equation}\label{eq:nsqroot_formula}
K_{\opL^{-1/2}}(x,y) = q^{-\ell(x)/2} \, G(d(x,y)) \, q^{-\ell(y)/2},
\end{equation}
where
\begin{equation}\label{eq: G}
G(n)=\sum_{k=0}^\infty q^{-(n+2k)/2} \, \tilde{k}^{\ZZ}(n+2k+1).
\end{equation}
In particular,
\begin{equation}\label{eq: Gdiff}
q^{n/2} [ G(n) - G(n+2) ] = \tilde k^\ZZ(n+1)
\end{equation}
for all $n \in \NN$.
\end{corollary}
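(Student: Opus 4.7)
The plan is to obtain \eqref{eq:nsqroot_formula} by subordination from the heat kernel formula of Proposition \ref{p: heatkernelT}, and then to get \eqref{eq: Gdiff} by a direct manipulation of the series defining $G$.

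First I would start from the subordination identity
\[
\opL^{-1/2} = \frac{1}{\sqrt{\pi}} \int_0^\infty e^{-t \opL} \, t^{-1/2} \, dt,
\]
valid on the spectral subspace $\{\opL > 0\}$ by the Spectral Theorem. Inserting the explicit formula
\[
H_t(x,y) = q^{-\ell(x)/2} \, J_t(d(x,y)) \, q^{-\ell(y)/2}
\]
from Proposition \ref{p: heatkernelT} into the integral shows that the integral kernel of $\opL^{-1/2}$ factors as in \eqref{eq:nsqroot_formula} with
\[
G(n) = \frac{1}{\sqrt{\pi}} \int_0^\infty J_t(n) \, t^{-1/2} \, dt.
\]
Expanding $J_t$ as the series from Proposition \ref{p: heatkernelT} and swapping sum and integral (which I will justify using the Calder\'on--Zygmund decay estimates \eqref{CZktilde} together with standard Gaussian-type bounds for $h_t^\ZZ$) yields
\[
G(n) = \sum_{k=0}^\infty q^{-(n+2k)/2} \cdot \frac{1}{\sqrt{\pi}} \int_0^\infty \widetilde\nabla_\ZZ h^\ZZ_t(n+2k+1) \, t^{-1/2} \, dt.
\]

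Next I would apply the same subordination identity on $\ZZ$ to recognize the inner integral as $\tilde k^\ZZ(n+2k+1)$. Concretely, since $\widetilde{\Rz}_\ZZ = \widetilde\nabla_\ZZ \, \Delta_\ZZ^{-1/2}$ is bounded on $\ell^2(\ZZ)$ (its Fourier symbol is the bounded function $-2\sqrt{2}\,i \cos(\xi/2)$ arising from cancellation of the $\sin(\xi/2)$ factor at $\xi = 0$), one has at the kernel level
\[
\tilde k^\ZZ(m) = \frac{1}{\sqrt{\pi}} \int_0^\infty \widetilde\nabla_\ZZ h^\ZZ_t(m) \, t^{-1/2} \, dt \qquad \forall m \in \ZZ,
\]
again justified spectrally and then read off on $\delta_0$, which is in $\ell^2(\ZZ)$. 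This gives exactly \eqref{eq: G}.

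For \eqref{eq: Gdiff}, I would simply reindex. Writing
\[
q^{n/2} G(n) = \sum_{k=0}^\infty q^{-k} \, \tilde k^\ZZ(n+2k+1),
\]
and
\[
q^{n/2} G(n+2) = \sum_{k=0}^\infty q^{-(k+1)} \, \tilde k^\ZZ(n+2k+3) = \sum_{j=1}^\infty q^{-j} \, \tilde k^\ZZ(n+2j+1)
\]
after the substitution $j = k+1$, subtracting yields the single term $\tilde k^\ZZ(n+1)$, which is \eqref{eq: Gdiff}.

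The only real obstacle is the justification of the two interchanges (sum/integral and the subordination at the kernel level); both reduce to absolute convergence, which follows from the Gaussian-type decay of $h_t^\ZZ$ at small $t$ together with the $t^{-1/2}$ decay of $\widetilde\nabla_\ZZ h_t^\ZZ$ at large $t$ implicit in the $(1+|n|)^{-1}$ bound for $\tilde k^\ZZ$.
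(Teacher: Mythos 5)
Your argument is correct and follows essentially the same route as the paper: subordination applied to $\opL^{-1/2}$ combined with the heat kernel formula of Proposition \ref{p: heatkernelT}, identification of the inner integral via the subordination formula \eqref{eq:oddrieszZZ} for $\widetilde\Rz_\ZZ$ on $\ZZ$, and a reindexing of the telescoping series to obtain \eqref{eq: Gdiff}. Your extra remarks on justifying the interchanges and on the boundedness of $\widetilde\Rz_\ZZ$ (already contained in Proposition \ref{p: rieszZZ}) are fine but not a departure from the paper's proof.
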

\begin{proof}
From the subordination formula
\[
\opL^{-1/2} = \frac{1}{\sqrt{\pi}} \int_0^\infty e^{-t\opL} \frac{dt}{t^{1/2}}
\]
and \eqref{eq: H=qJ}, we deduce that the integral kernel of $\opL^{-1/2}$ has the form \eqref{eq:nsqroot_formula}, with $G$ given by
\[
G(n) = \frac{1}{\sqrt{\pi}} \int_0^\infty J_t(n) \frac{dt}{t^{1/2}} = \sum_{k=0}^{\infty} q^{-(n+2k)/2} \frac{1}{\sqrt{\pi}} \int_0^\infty \widetilde{\nabla}_{\ZZ} h^{\ZZ}_t(n+2k+1) \frac{dt}{t^{1/2}}.
\]
The analogous subordination formula applied to $\Delta_\ZZ$ in place of $\opL$ also gives
\[
\widetilde \Rz_\ZZ = \frac{1}{\sqrt{\pi}} \int_0^\infty \widetilde\nabla_\ZZ e^{-t\Delta_\ZZ} \frac{dt}{t^{1/2}},
\]
that is,
\begin{equation}\label{eq:oddrieszZZ}
\tilde k^\ZZ = \frac{1}{\sqrt{\pi}} \int_0^\infty \widetilde\nabla_\ZZ h^\ZZ_t \frac{dt}{t^{1/2}},
\end{equation}
and the desired expression \eqref{eq: G} for $G$ follows. The identity \eqref{eq: Gdiff} is an immediate consequence of \eqref{eq: G}.
\end{proof}

As we shall see, the identity in Corollary \ref{c: negsquareroot} will be crucial for us to deduce boundedness properties of $\Rz$ from those of $\Rz_\ZZ$. This deduction is made possible by the transference result discussed in the next section.

\section{Punctured boundary, disintegration and transference}\label{s: lifting}

Let $\Omega=\partial T \setminus \{\myth\}$, where $\myth$ is the mythical ancestor determining the direction of the flow.
For all $\omega \in \Omega$ and $n \in \ZZ$, we define $\omega_n \in T$ as the only vertex in the geodesic $[\omega,\myth]$ joining $\omega$ to $\omega_*$ such that $\ell(\omega_n) = n$. The mapping $\Omega \times \ZZ \ni (\omega,n) \mapsto \omega_n \in T$ is clearly surjective, and allows us to consider $T$ as a ``quotient'' of the product $\Omega \times \ZZ$.

Crucially, one can disintegrate the flow measure $\mu$ along this mapping, and consider it as the push-forward of a product measure on $\Omega \times \ZZ$. Namely, we can equip $\Omega$ with the measure $\nu$ such that,
if we set $\Omega_x = \{w \in \Omega \colon x \in [\omega,\myth]\}$, then
\[
\nu(\Omega_x)=\mu(x)=q^{\ell(x)}
\]
for all $x \in T$ (see \cite[Section 2]{CMS2}) and \cite[Formula (3.5)]{Veca}). An application of Fubini's Theorem then readily shows that
\begin{equation}\label{disintegration}
  \sum_{x \in T} f(x) \,\mu(x) = \int_\Omega \sum_{n \in \ZZ} f(\omega_n) \, d\nu(\omega)
\end{equation}
for all nonnegative or $\mu$-summable $f \in \CC^T$ (cf.\ \cite[Formula (3.1)]{CMS2}).

We now define the \emph{lifting operator} $\Phi : \CC^{T} \to \CC^{\Omega \times \ZZ}$ by
\begin{equation}\label{sigmaPhi}
		\Phi f(\omega,n) = f(\omega_n) \qquad \forall \omega\in\Omega, n\in\ZZ.
\end{equation}

\begin{prop}\label{p: PhiPhi*}
The following hold:
\begin{enumerate}[label=(\roman*)]
\item\label{en:phiphidisint} for every nonnegative or $\mu$-summable $f \in \CC^T$,
\[
  \sum_{x \in T} f(x) \, \mu(x) = \int_{\Omega\times \ZZ} \Phi f \, d(\nu \times \#);
\]
\item\label{en:phiphiiso} $\Phi$ is an isometric embedding from $L^p(\mu)$ to $L^p(\nu \times \#)$ for every $p\in [1,\infty]$, and also from $L^{1,\infty}(\mu)$ to $L^{1,\infty}(\nu \times \#)$;
\item\label{en:phiphistar} the adjoint map $\Phi^*$ is given by
\[
\Phi^*g(x) = \frac{1}{\nu(\Omega_x)} \int_{\Omega_x} g(\omega,\ell(x)) \, d\nu(\omega);
\]
\item\label{en:phiphibound} $\Phi^*$ maps $L^{p}(\nu \times \#)$ to $L^p(\mu)$ with norm equal to 1 for every $p \in [1,\infty]$, and moreover $\Phi^* \Phi = \id_T$;
\item\label{en:phiphiunbound} the map $\Phi \Phi^*$ is not bounded on $L^{1,\infty}(\nu\times \#)$.
\end{enumerate}
\end{prop}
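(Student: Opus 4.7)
Parts \ref{en:phiphidisint}--\ref{en:phiphibound} are direct computations. Part \ref{en:phiphidisint} is immediate by substituting $\Phi f(\omega, n) = f(\omega_n)$ into the integral over $\Omega \times \ZZ$ and applying \eqref{disintegration}. Applying the same identity to $|f|^p$ yields the $L^p$-isometry in part \ref{en:phiphiiso} for $p \in [1,\infty)$, while the $L^{1,\infty}$ case (and the $L^\infty$ case) follow from the distributional identity $(\nu\times\#)\{|\Phi f|>\lambda\} = \mu\{|f|>\lambda\}$, obtained by applying \eqref{disintegration} to $\chi_{\{|f|>\lambda\}}$, together with the surjectivity of $(\omega, n) \mapsto \omega_n$. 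For part \ref{en:phiphistar} I would compute $\langle \Phi f, g\rangle_{L^2(\nu \times \#)}$, split the $\nu$-integral at each level $n$ via the partition $\Omega = \bigsqcup_{\ell(x) = n} \Omega_x$, and read off the formula for $\Phi^*$. For part \ref{en:phiphibound}, the equality $\Phi^* \Phi f(x) = f(x)$ is immediate from the formula (since $\omega \in \Omega_x$ forces $\omega_{\ell(x)} = x$), and $L^p$-boundedness of $\Phi^*$ with norm $1$ follows from Jensen's inequality applied to the average defining $\Phi^*$ combined with the partition identity; the lower bound $\|\Phi^*\| \geq 1$ comes from $\Phi^* \Phi = \id_T$ and the isometry in \ref{en:phiphiiso}.

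The substantive statement is part \ref{en:phiphiunbound}. The key observation is that
\[
\Phi \Phi^* g(\omega, n) = \frac{1}{\nu(\Omega_{\omega_n})} \int_{\Omega_{\omega_n}} g(\omega', n)\, d\nu(\omega'),
\]
so on each slice $\Omega \times \{n\}$ the operator is the conditional expectation relative to the partition $\{\Omega_x\}_{\ell(x)=n}$ of $\Omega$ into blocks of finite $\nu$-measure $q^n$. While this is an $L^1$-contraction, its definition requires the local integrability of $g(\cdot,n)$ over such blocks, which fails in general for $L^{1,\infty}$ functions. My plan is to exploit exactly this failure to build $g \in L^{1,\infty}(\nu \times \#)$ whose image is $+\infty$ on a set of positive measure. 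Concretely, fix $x_* \in T$ with $\ell(x_*) = 0$ and, for each integer $k \geq 1$, choose a descendant $x_k$ of $x_*$ with $\ell(x_k) = -k$, picking the $\Omega_{x_k}$ pairwise disjoint (possible since $q \geq 2$ guarantees branching). Setting
\[
g(\omega, n) = \chi_{\{0\}}(n) \sum_{k \geq 1} q^k \chi_{\Omega_{x_k}}(\omega),
\]
disjointness gives $(\nu \times \#)\{|g|>\lambda\} = \sum_{k : q^k > \lambda} q^{-k}$, a geometric tail of order $1/\lambda$, so $\|g\|_{L^{1,\infty}(\nu \times \#)} \leq q/(q-1)$. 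On the other hand, for every $\omega \in \Omega_{x_*}$ one has $\omega_0 = x_*$, and the formula from part \ref{en:phiphistar} gives
\[
\Phi \Phi^* g(\omega, 0) = \frac{1}{\mu(x_*)} \int_{\Omega_{x_*}} g(\omega', 0)\, d\nu(\omega') = \sum_{k \geq 1} q^k \cdot q^{-k} = +\infty,
\]
since $\Omega_{x_k} \subseteq \Omega_{x_*}$ and $\mu(x_*) = 1$. As $\Omega_{x_*} \times \{0\}$ has positive $(\nu \times \#)$-measure, $\Phi \Phi^* g$ cannot belong to $L^{1,\infty}(\nu \times \#)$.

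The only mildly technical step is the distribution-function estimate for $g$, which is routine; the conceptual heart of the proof, and in my view the main obstacle to naively expecting $L^{1,\infty}$-boundedness, is recognising that $L^{1,\infty}$ does not embed into $L^1_{\mathrm{loc}}$ while the averaging structure of $\Phi \Phi^*$ implicitly demands that it should.
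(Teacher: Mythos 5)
Your proposal is correct and follows essentially the same route as the paper: parts (i)--(iv) via the same direct computations with the disintegration formula and duality, and part (v) via an explicit weak-$L^1$ function concentrated on the slice $\{n=0\}$ whose integral over a single cone $\Omega_x$ diverges, so that $\Phi\Phi^*$ of it equals $+\infty$ on a set of positive measure. The only cosmetic difference is the test function: the paper puts the values $q^{-n}$ on the nested annuli $\Omega_{\overline{\omega}_n}\setminus\Omega_{\overline{\omega}_{n-1}}$ around a fixed boundary point, while you put $q^{k}$ on pairwise disjoint cones $\Omega_{x_k}$ branching off a ray; both choices have finite weak-$L^1$ norm and divergent integral over the relevant cone, so the arguments are interchangeable.
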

\begin{proof}
Property \ref{en:phiphidisint} is just a rephrasing of \eqref{disintegration}. Property \ref{en:phiphiiso} follows from \ref{en:phiphidisint} and the fact that $|\Phi f|^p =\Phi(|f|^p)$ for all $p \in [1,\infty)$.

We now prove property \ref{en:phiphistar}. For every $f$ in $\CC^T$ and $g$ in $\CC^{\Omega\times \ZZ}$,
\[\begin{split}
  \int_{\Omega \times \ZZ} (\Phi f) \, \overline{g} \,d(\nu \times \#)
		&=\int_{\Omega} \sum_{n \in \ZZ} f(\omega_n) \, \overline{g}(\omega,n) \,d\nu(\omega)\\
		&=\sum_{n \in \ZZ} \sum_{x \colon \ell(x)=n} f(x) \int_{\Omega_x} \overline{g}(\omega, n) \,d\nu(\omega) \\
    &=\sum_{x \in T} f(x) \, \overline{\frac{1}{\nu(\Omega_x)} \int_{\Omega_x} g(\omega,\ell(x)) \,d\nu(\omega)} \, \mu(x),
\end{split}\]
whence we deduce the formula for $\Phi^*$.

As for part \ref{en:phiphibound}, the $L^p$-boundedness with norm $1$ of $\Phi^*$ follows by duality from part \ref{en:phiphiiso}, and the case $p=2$ of the latter also implies that $\Phi^* \Phi = \id_T$.

To prove \ref{en:phiphiunbound}, let us fix an element $\overline\omega \in \Omega_o$ (thus $\overline{\omega}_0 = o$), and note that $\{\Omega_{\overline{\omega}_n}\}_{n\in \ZZ}$ is a strictly increasing sequence of subsets of $\Omega$ with
\[
\bigcap_{n \in \ZZ} \Omega_{\overline{\omega}_n} = \{\overline{\omega}\}, \qquad \bigcup_{n \in \ZZ} \Omega_{\overline{\omega}_n} = \Omega.
\]
Let $F_o : \Omega \to \RR$ and $F : \Omega \times \ZZ \to \CC$ be defined by
\[
F_o = \sum_{n\leq 0} q^{-n} \chi_{\Omega_{\overline{\omega}_n}\setminus \Omega_{\overline{\omega}_{n-1}}} , \qquad F = F_o \otimes \chi_{\{0\}}.
\]
It is easy to see that
\[
\int_\Omega F_o \,d\nu = \sum_{n\leq 0} q^{-n}\nu( \Omega_{\overline{\omega}_n} \setminus \Omega_{\overline{\omega}_{n-1}}) = \sum_{n \leq 0} q^{-n} (q^n - q^{n-1}) = +\infty.
\]
Moreover, for every $\lambda >0$,
\[
\{(\omega,n) \colon |F(\omega,n)| > \lambda \} = \{\omega \colon F_o(\omega) > \lambda \} \times \{0\} = (\Omega_{\overline{\omega}_{n(\lambda)}} \setminus \{\overline{\omega}\}) \times \{0\} ,
\]
where $n(\lambda) = \max \{ n \leq 0 \colon n < \log_{q}(1/\lambda)\}$, so
\[
(\nu\times \#)(\{(\omega,n) \colon |F(\omega,n)| > \lambda \}) = q^{n(\lambda)} \leq \frac{1}{\lambda},
\]
and therefore $F \in L^{1,\infty}(\nu\times \#)$. Now, for every $\omega \in \Omega_o$,
\[
\Phi \Phi^* F(\omega,0) = \Phi^* F(o) = \frac{1}{\nu(\Omega_o)} \int_{\Omega_o} F_o \,d\nu = +\infty\,,
\]
which implies that $\Phi \Phi^* F$ does not belong to $L^{1,\infty}(\nu \times \#)$. This proves \ref{en:phiphiunbound}.
\end{proof}

An immediate consequence of the boundedness properties of the lifting operator is the following relation between weak and strong type bounds of operators on $T$ and on $\Omega \times \ZZ$.

\begin{prop}\label{p: liftingbounds}
Assume that $A$ and $\alpha$ are linear operators on $\CC^T$ and $\CC^{\Omega \times \ZZ}$ respectively.
\begin{enumerate}[label=(\roman*)]
\item\label{en:liftweak} $A$ is of weak type $(1,1)$ on $(T,\mu)$ if and only if $\Phi A \Phi^*$ is of weak type $(1,1)$ on $(\Omega \times \ZZ, \nu \times \#)$, and their norms are the same.
\item\label{en:liftstrong} For any $p \in [1,\infty]$, $A$ is $L^p(\mu)$-bounded if and only if $\Phi A \Phi^*$ is $L^p(\nu \times \#)$-bounded, and their norms are the same.
\item\label{en:liftstrongrev} For any $p \in [1,\infty]$, if $\alpha$ is $L^p(\nu \times \#)$-bounded, then $\Phi^* \alpha \Phi$ is $L^p(\mu)$-bounded, with norm not greater than that of $\alpha$.
\end{enumerate}
\end{prop}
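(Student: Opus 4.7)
The plan is to derive all three statements as short algebraic consequences of the boundedness and isometry properties of $\Phi$ and $\Phi^*$ collected in Proposition \ref{p: PhiPhi*}, using the key identity $\Phi^*\Phi = \id_T$ to ``invert'' the lifting when needed. Part \ref{en:liftstrongrev} is essentially immediate: for $f \in L^p(\mu)$ we chain
\[
\|\Phi^* \alpha \Phi f\|_{L^p(\mu)} \le \|\Phi^*\|_{p\to p} \, \|\alpha\|_{p\to p} \, \|\Phi f\|_{L^p(\nu\times\#)} = \|\alpha\|_{p\to p} \, \|f\|_{L^p(\mu)},
\]
using that $\Phi^*$ has $L^p$-norm $1$ and $\Phi$ is an $L^p$-isometry.

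For the forward direction of part \ref{en:liftstrong}, the same reasoning yields $\|\Phi A\Phi^* g\|_{L^p(\nu\times\#)} \le \|A\|_{p\to p}\|g\|_{L^p(\nu\times\#)}$ by sandwiching between the $L^p$-isometry of $\Phi$ and the $L^p$-boundedness of $\Phi^*$. For the converse direction I would exploit $\Phi^*\Phi = \id_T$ to write $A = \Phi^*(\Phi A \Phi^*)\Phi$; then, for $f \in L^p(\mu)$,
\[
\|Af\|_{L^p(\mu)} = \|\Phi^* (\Phi A \Phi^*) \Phi f\|_{L^p(\mu)} \le \|\Phi A \Phi^*\|_{p\to p} \, \|\Phi f\|_{L^p(\nu\times\#)} = \|\Phi A \Phi^*\|_{p\to p} \, \|f\|_{L^p(\mu)},
\]
again using that $\|\Phi^*\|_{p\to p}=1$ and that $\Phi$ is an $L^p$-isometry. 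The two inequalities yield the equality of norms.

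For part \ref{en:liftweak}, the forward direction runs exactly as above, noting that $\Phi$ is an $L^{1,\infty}$-isometry (so $\|\Phi A \Phi^* g\|_{L^{1,\infty}(\nu\times\#)} = \|A\Phi^* g\|_{L^{1,\infty}(\mu)}$) and then applying the weak type $(1,1)$ bound of $A$ together with $\|\Phi^*\|_{L^1\to L^1}=1$. The subtle point, which is the only genuine obstacle, is the converse direction: one cannot mimic the argument of \ref{en:liftstrong} because, by Proposition \ref{p: PhiPhi*}\ref{en:phiphiunbound}, $\Phi^*$ fails to be bounded on $L^{1,\infty}(\nu\times\#)$, so applying $\Phi^*$ to a weak-type inequality on $\Omega\times\ZZ$ would lose the estimate. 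The way around this is to avoid $\Phi^*$ on the outside: one uses only the isometric embedding $\Phi$ on $L^{1,\infty}$ to write
\[
\|Af\|_{L^{1,\infty}(\mu)} = \|\Phi A f\|_{L^{1,\infty}(\nu\times\#)} = \|\Phi A \Phi^* (\Phi f)\|_{L^{1,\infty}(\nu\times\#)} \le \|\Phi A \Phi^*\|_{1\to 1,\infty} \, \|f\|_{L^1(\mu)},
\]
where the second equality uses $\Phi^*\Phi = \id_T$ and the last step uses that $\Phi$ is an $L^1$-isometry. Combining with the forward bound gives equality of the weak-type norms, completing the proposition.
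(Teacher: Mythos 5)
Your proof is correct and follows essentially the same route as the paper: the forward implications and part (iii) by sandwiching with the isometry of $\Phi$ and the norm-one bound for $\Phi^*$, and the delicate converse in (i) handled exactly as in the paper via $\|Af\|_{L^{1,\infty}(\mu)}=\|\Phi A\Phi^*\Phi f\|_{L^{1,\infty}(\nu\times\#)}$, using $\Phi^*\Phi=\id_T$ and the isometric embedding on $L^{1,\infty}$ so as to avoid the unbounded averaging $\Phi\Phi^*$. The only (immaterial) variation is that for the converse in (ii) you write $A=\Phi^*(\Phi A\Phi^*)\Phi$ and invoke the $L^p$-boundedness of $\Phi^*$, while the paper reuses the same one-sided argument as in (i).
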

\begin{proof}
Part \ref{en:liftstrongrev}, as well as the ``only if'' implications in parts \ref{en:liftweak} and \ref{en:liftstrong}, follow immediately by the boundedness properties of $\Phi$ and $\Phi^*$ discussed in Proposition \ref{p: PhiPhi*} \ref{en:phiphiiso}-\ref{en:phiphibound}. As for the reverse implication in part \ref{en:liftweak}, it is enough to observe that
\[
\| A f \|_{L^{1,\infty}(\mu)} = \| \Phi A \Phi^* \Phi f \|_{L^{1,\infty}(\nu \times \#)} \leq \| \Phi A \Phi^* \|_{L^1 \to L^{1,\infty}} \| f \|_{L^1(\mu)}
\]
as $\Phi$ is an isometric embedding and $\Phi^* \Phi = \id_T$ by Proposition \ref{p: PhiPhi*}; a completely analogous argument proves the remaining implication in part \ref{en:liftstrong}.
\end{proof}

\begin{remark}\label{rem:lifting}
The implication in part \ref{en:liftstrongrev} of Proposition \ref{p: liftingbounds} cannot in general be reversed. Indeed, according to part \ref{en:liftstrong}, $\Phi^* \alpha \Phi$ is $L^p(\mu)$-bounded if and only if $\Phi \Phi^* \alpha \Phi \Phi^*$ is $L^p(\nu \times \#)$-bounded; clearly the latter would follow from the $L^p$-boundedness of $\alpha$, but is not equivalent to it, as the ``averaging operator'' $\Phi \Phi^*$ may reduce $L^p$ norms. Similarly, by part \ref{en:liftweak}, $\Phi^* \alpha \Phi$ is of weak type $(1,1)$ if and only if $\Phi \Phi^* \alpha \Phi \Phi^*$ is; however $\Phi \Phi^*$ is unbounded on $L^{1,\infty}$ (see Proposition \ref{p: PhiPhi*} \ref{en:phiphiunbound}), so the weak type $(1,1)$ of $\alpha$ in general does not imply the analogous property for $\Phi^* \alpha \Phi$.
\end{remark}

We now define the \emph{shift operator} $\sigma: \CC^{\Omega \times \ZZ} \to \CC^{\Omega \times \ZZ}$ by
\begin{equation}\label{sigma}
\sigma g(\omega,n) = g(\omega,n+1) \qquad \forall g\in \CC^{\Omega \times \ZZ},\omega\in\Omega, n\in\ZZ.
\end{equation}
Moreover, for every $n\in\ZZ$ we define
\begin{equation}\label{tildeSigma}
\tilde\Sigma_n = \begin{cases}
\Sigma^n &\text{if } n \geq 0,\\
(\Sigma^*)^{-n} &\text{if } n<0.
\end{cases}
\end{equation}
The maps $\Phi$, $\sigma$ and $\Sigma$ form a commutative diagram,
\[
\begin{tikzcd}
\CC^{\Omega \times \ZZ} \arrow{r}{\sigma} & \CC^{\Omega \times \ZZ}\\
\CC^{T} \arrow{r}{\Sigma} \arrow{u}{\Phi} & \CC^T \arrow{u}{\Phi},
\end{tikzcd}
\]
as discussed in the following proposition.

\begin{prop}
The following hold:
\begin{enumerate}[label=(\roman*)]
\item\label{en:sigma_lift} $\sigma \Phi = \Phi \Sigma$;
\item\label{en:sigma_comp} $\tilde\Sigma_n = \Phi^*\sigma^n\Phi$ for all $n \in \ZZ$.
\end{enumerate}
\end{prop}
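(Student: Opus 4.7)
The plan is to verify both identities by direct computation, exploiting the definitions of $\omega_n$, $\pred$, $\Phi$, $\Phi^*$, $\Sigma$, $\Sigma^*$ and $\sigma$, and then to bootstrap (ii) from (i) using $\Phi^*\Phi = \id_T$.

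For part \ref{en:sigma_lift}, the key geometric fact is that $\omega_{n+1} = \pred(\omega_n)$: indeed, $\omega_{n+1}$ is by construction the unique vertex at level $n+1$ on the geodesic $[\omega,\myth]$, which coincides with the unique neighbour of $\omega_n$ at level one higher, namely $\pred(\omega_n)$. Consequently, for every $f \in \CC^T$, $\omega \in \Omega$ and $n \in \ZZ$,
\[
\sigma \Phi f(\omega,n) = \Phi f(\omega,n+1) = f(\omega_{n+1}) = f(\pred(\omega_n)) = \Sigma f(\omega_n) = \Phi \Sigma f(\omega,n).
\]

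For part \ref{en:sigma_comp}, I would split according to the sign of $n$. When $n \geq 0$, iterating part \ref{en:sigma_lift} gives $\sigma^n \Phi = \Phi \Sigma^n$, and applying $\Phi^*$ on the left together with the relation $\Phi^* \Phi = \id_T$ from Proposition \ref{p: PhiPhi*} \ref{en:phiphibound} yields $\Phi^* \sigma^n \Phi = \Sigma^n = \tilde\Sigma_n$. When $n < 0$, set $m = -n > 0$ and take the $L^2$-adjoint of the identity in \ref{en:sigma_lift}; since the measure $\nu \times \#$ is invariant under the shift in the $\ZZ$-factor, $\sigma$ is unitary on $L^2(\nu \times \#)$ with $\sigma^* = \sigma^{-1}$, giving $\Phi^* \sigma^{-1} = \Sigma^* \Phi^*$. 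Iterating this $m$ times and post-composing with $\Phi$ produces $\Phi^* \sigma^n \Phi = (\Sigma^*)^m \Phi^*\Phi = (\Sigma^*)^m = \tilde\Sigma_n$.

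There is no real obstacle; the only mild technical point is justifying the adjoint manipulation in the $n<0$ case, which rests on the shift-invariance of $\nu \times \#$. Should one prefer to avoid passing through adjoints, the same identity $\Phi^*\sigma^{-m}\Phi = (\Sigma^*)^m$ can be verified directly by unwinding the explicit formula for $\Phi^*$ from Proposition \ref{p: PhiPhi*} \ref{en:phiphistar}: for $\omega \in \Omega_x$ the vertex $\omega_{\ell(x)-m}$ is the unique descendant of $x$ at distance $m$ lying on the geodesic $[\omega,\myth]$, so $\Omega_x$ decomposes as the disjoint union of the sets $\Omega_y$ over $y \leq x$ with $\ell(y)=\ell(x)-m$, and the identity $\nu(\Omega_y)/\nu(\Omega_x) = q^{-m}$ exactly produces the averaging performed by $(\Sigma^*)^m$.
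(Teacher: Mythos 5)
Your proposal is correct and follows essentially the same route as the paper: the identical pointwise computation $\sigma\Phi f(\omega,n)=f(\omega_{n+1})=f(\pred(\omega_n))=\Phi\Sigma f(\omega,n)$ for part (i), iteration together with $\Phi^*\Phi=\id_T$ for $n\geq 0$, and the unitarity of $\sigma$ on $L^2(\nu\times\#)$ (so $\sigma^*=\sigma^{-1}$) to pass to $n<0$ by taking adjoints. Whether you take the adjoint of the single-step identity $\sigma\Phi=\Phi\Sigma$ and iterate, as you do, or of the already iterated identity $\Phi^*\sigma^n\Phi=\Sigma^n$, as the paper does, is an immaterial difference; your optional direct verification via the decomposition of $\Omega_x$ into the sets $\Omega_y$ with $y\leq x$, $\ell(y)=\ell(x)-m$, is also sound.
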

\begin{proof}
Clearly
\[
\sigma \Phi f(\omega,n) = \Phi f(\omega,n+1) = f(\omega_{n+1}) = f(\pred(\omega_n)) = \Sigma f (\omega_n) = \Phi \Sigma f (\omega,n)
\]
for all $f \in \CC^T$, $\omega \in \Omega$ and $n \in \ZZ$, which proves part \ref{en:sigma_lift}. Iteration of this identity also gives
\[
\sigma^n \Phi = \Phi \Sigma^n
\]
for all $n \in \NN$. Applying $\Phi^*$ to both sides of this identity and using the fact that $\Phi^* \Phi = \id_T$ (see Proposition \ref{p: PhiPhi*} \ref{en:phiphibound}) gives
\[
\Phi^* \sigma^n \Phi = \Sigma^n,
\]
which proves part \ref{en:sigma_comp} in the case $n \in \NN$. To complete the proof of part \ref{en:sigma_comp}, it is enough to take adjoints in the latter identity, and use the fact that $(\sigma^n)^* = \sigma^{-n}$, as $\sigma^n$ is a unitary automorphism of $L^2(\nu \times \#)$.
\end{proof}

In light of the previous proposition, any operator $\cK$ on $\CC^T$ of the form
\begin{equation}\label{eq:liftable}
\cK = \sum_{n \in \ZZ} k(n) \, \tilde\Sigma_{-n},
\end{equation}
for some $k : \ZZ \to \CC$, ``lifts'' to an operator on $\CC^{\Omega \times \ZZ}$ of the form
\[
\sum_{n \in \ZZ} k(n) \, \sigma^{-n} = \id_\Omega \otimes \tau(k),
\]
where $\tau(k)$ is the convolution operator on $\ZZ$ with convolution kernel $k$, i.e., $\tau(k) f = f *_\ZZ k$.
In other words, we can write
\[
\cK = \Phi^* (\id_\Omega \otimes \tau(k)) \Phi.
\]
Therefore boundedness properties of $\cK$ can be related to boundedness properties of $\tau(k)$ by means of Proposition \ref{p: liftingbounds} and the following statement, which collects a few immediate consequences of Fubini's Theorem.

\begin{lemma}
Let $B$ be a linear operator on $\CC^\ZZ$.
\begin{enumerate}[label=(\roman*)]
\item For any $p \in [1,\infty]$, $B$ is $\ell^p(\ZZ)$-bounded if and only if $\id_\Omega \otimes B$ is $L^p(\nu \times \#)$-bounded, and their norms are the same.
\item For any $p \in [1,\infty]$, $B$ is of weak type $(1,1)$ on $\ZZ$ if and only if $\id_\Omega \otimes B$ is of weak type $(1,1)$ on $\Omega \times \ZZ$, and their norms are the same.
\end{enumerate}
\end{lemma}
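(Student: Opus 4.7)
The plan is to reduce both claims to Fubini's theorem, by viewing a function $f : \Omega \times \ZZ \to \CC$ as a $\nu$-measurable family $\{f(\omega, \cdot)\}_{\omega \in \Omega}$ of functions in $\CC^\ZZ$, on each of which $\id_\Omega \otimes B$ acts simply by $B$. Concretely, for part (i) with $p < \infty$, I would first apply Fubini to obtain
\[
\|(\id_\Omega \otimes B) f\|_{L^p(\nu \times \#)}^p = \int_\Omega \|B f(\omega, \cdot)\|_{\ell^p(\ZZ)}^p \, d\nu(\omega);
\]
integrating the pointwise estimate $\|B f(\omega,\cdot)\|_{\ell^p} \leq \|B\|_{\ell^p \to \ell^p} \, \|f(\omega,\cdot)\|_{\ell^p}$ then yields the forward implication with matching norms. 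For the converse, given $F \in \ell^p(\ZZ)$, I would test $\id_\Omega \otimes B$ on tensor-product functions $f = \chi_E \otimes F$ with $E \subseteq \Omega$ of finite positive $\nu$-measure (say $E = \Omega_o$): both $\|f\|_{L^p(\nu \times \#)}$ and $\|(\id_\Omega \otimes B) f\|_{L^p(\nu \times \#)}$ factor as $\nu(E)^{1/p}$ times the corresponding $\ell^p$ norms of $F$ and $BF$, so the $L^p$-operator norm of $\id_\Omega \otimes B$ dominates that of $B$ on $\ell^p$. The case $p=\infty$ works the same way, with essential suprema replacing integrals.

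For part (ii) the same bookkeeping, now applied to level set measures, does the job: Fubini gives
\[
(\nu \times \#)\{|(\id_\Omega \otimes B) f| > \lambda\} = \int_\Omega \#\{n \in \ZZ \colon |(B f(\omega,\cdot))(n)| > \lambda\} \, d\nu(\omega),
\]
so the pointwise weak-$(1,1)$ estimate for $B$ on each slice integrates, together with Fubini applied to $\|f\|_{L^1(\nu \times \#)} = \int_\Omega \|f(\omega,\cdot)\|_{\ell^1} \, d\nu(\omega)$, to the weak-$(1,1)$ estimate for $\id_\Omega \otimes B$ with the same constant. The reverse direction again follows by testing on $\chi_E \otimes F$ with $0 < \nu(E) < \infty$, whose $L^1$ norm and distribution function both factor cleanly in the two variables. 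The only subtlety, which I would dispatch in a single line, is verifying that $(\omega, n) \mapsto (B f(\omega,\cdot))(n)$ is jointly $(\nu \times \#)$-measurable; this is immediate since $B$ is represented by a matrix on $\CC^\ZZ$, so its action on each slice reduces to a countable sum of measurable functions of $\omega$. No deeper obstacle arises.
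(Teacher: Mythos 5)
Your argument is correct and is exactly the route the paper intends: the lemma is stated there without proof as ``a few immediate consequences of Fubini's Theorem,'' and your slice-wise Fubini computation plus testing on tensor products $\chi_E\otimes F$ (e.g.\ with $E=\Omega_o$, $\nu(\Omega_o)=1$) supplies precisely the missing details, with matching constants in both directions. The only minor quibble is the throwaway justification that an arbitrary linear operator on $\CC^\ZZ$ ``is represented by a matrix''; this is not literally true in general, but for the operators to which the lemma is applied (and for any $B$ bounded on $\ell^p$, $p<\infty$, by density of finitely supported sequences) the joint measurability is indeed immediate, so no real gap arises.
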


Recall that $\Cv^p(\ZZ)$ is the space of all $L^p$-convolutors of $\ZZ$, i.e., the convolution kernels of the $\ell^p(\ZZ)$-bounded translation-invariant operators. By combining the previous results, we obtain the following statement.

\begin{theorem}\label{thm: transference}
For all $p \in [1,\infty]$, if $k \in \Cv^p(\ZZ)$ and $\cK$ is defined by \eqref{eq:liftable}, then $\cK$ is $L^p(\mu)$-bounded, with norm at most $\| k \|_{\Cv^p}$.
\end{theorem}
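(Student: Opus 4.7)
The plan is to assemble this as an immediate consequence of the machinery developed in the section, with essentially no new content beyond unwinding definitions. First I would verify that $\cK$ can be rewritten via the lifting operator $\Phi$. Using the identity $\tilde\Sigma_n = \Phi^*\sigma^n\Phi$ from the preceding proposition, for each $n\in\ZZ$ we have $\tilde\Sigma_{-n} = \Phi^*\sigma^{-n}\Phi$, so
\[
\cK = \sum_{n \in \ZZ} k(n)\,\Phi^*\sigma^{-n}\Phi = \Phi^*\Bigl(\sum_{n \in \ZZ} k(n)\,\sigma^{-n}\Bigr)\Phi,
\]
at least when the sum is finite (the case to which the general case will be reduced by density).

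Second, I would check by direct computation that the inner operator equals $\id_\Omega \otimes \tau(k)$. Since $\sigma^{-n}g(\omega,m) = g(\omega,m-n)$, for any $g \in \CC^{\Omega\times\ZZ}$,
\[
\sum_{n \in \ZZ} k(n)\,\sigma^{-n}g(\omega,m) = \sum_{n \in \ZZ} k(n)\,g(\omega,m-n) = (g(\omega,\cdot) *_\ZZ k)(m) = (\id_\Omega \otimes \tau(k))g(\omega,m).
\]
Hence $\cK = \Phi^*\bigl(\id_\Omega \otimes \tau(k)\bigr)\Phi$.

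Third, I would chain the available boundedness results. By hypothesis, $\tau(k)$ is bounded on $\ell^p(\ZZ)$ with norm $\|k\|_{\Cv^p}$; by the Fubini-type lemma preceding the theorem, $\id_\Omega \otimes \tau(k)$ is bounded on $L^p(\nu \times \#)$ with the same norm; and by Proposition~\ref{p: liftingbounds}~\ref{en:liftstrongrev}, conjugation by $\Phi$ and $\Phi^*$ produces an $L^p(\mu)$-bounded operator of norm no larger than $\|k\|_{\Cv^p}$. This yields the claim.

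The one technical wrinkle to address is the convergence of the formal series defining $\cK$ for a general $k \in \Cv^p(\ZZ)$, since such $k$ need not be summable and $\tilde\Sigma_{-n}$ is not a contraction on $L^p(\mu)$ for $n>0$. I would handle this by first proving the identity and the bound for kernels $k$ of finite support (where every sum is finite and the operator identity is a genuine equality of bounded operators on $L^p(\mu)$), and then extending to arbitrary $k \in \Cv^p(\ZZ)$ by a standard approximation argument: truncate $k$ to a finite window, pass to the limit on the dense subclass of finitely supported functions on $T$ using the uniform bound $\|k\|_{\Cv^p}$, and extend by continuity. No further obstacle is expected, as all the genuine work has been done in the construction of $\Phi$ and in the verification of the lifting properties of $\Sigma$.
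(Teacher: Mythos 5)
Your argument is exactly the paper's (implicit) proof: identify $\cK = \Phi^*(\id_\Omega \otimes \tau(k))\Phi$ via $\tilde\Sigma_{-n} = \Phi^*\sigma^{-n}\Phi$, use the Fubini-type lemma to bound $\id_\Omega \otimes \tau(k)$ on $L^p(\nu\times\#)$ by $\|k\|_{\Cv^p}$, and conclude with Proposition \ref{p: liftingbounds} \ref{en:liftstrongrev}. Your extra paragraph on convergence of the series for non-summable $k$ is a reasonable way to handle a point the paper leaves implicit (one can also note that the sum defining $\cK f(x)$ is pointwise finite for finitely supported $f$), and does not change the approach.
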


\begin{remark}\label{rem:transfernce}
The previous theorem can be thought of as a transference result for $L^p$ bounds from the group $\ZZ$ to the weighted tree $(T,\mu)$, which holds despite the fact that $n \mapsto \tilde\Sigma_n$ is not a representation of $\ZZ$ on $L^p(\mu)$, nor does it appear to fit into the more general framework of ``transference couples'' described in \cite{BPW}. It is not clear to us whether an analogous transference result could hold for weak type $(1,1)$ bounds: due to the obstruction discussed in Remark \ref{rem:lifting}, the proof given above for strong type bounds does not appear to extend to the weak type case too.
\end{remark}

\section{Boundedness results for \texorpdfstring{$\Rz$}{R}}\label{s: verticalRiesz}

\subsection{\texorpdfstring{$L^p$}{Lp}-boundedness of the Riesz transform \texorpdfstring{$\Rz$}{R}}

We start with an observation about ``almost-radial'' integral operators on $T$ in the sense of Section \ref{s: heat}.

\begin{lemma}\label{lem: radialintskew}
Let $\cK$ be an integral operator on $(T,\mu)$ with kernel
\[
K(x,y) = q^{-\ell(x)/2} \, G(d(x,y)) \, q^{-\ell(y)/2},
\]
where $G : \NN \to \RR$.
Let $\cS$ denote the composition $\nabla \cK$. Then,
\begin{equation}\label{skew part}
  \cS - \cS^* = \sum_{n \in \ZZ} h(n) \,\tilde{\Sigma}_{-n},
\end{equation}
where
\begin{equation}\label{eq: skewker}
h(n) = \begin{cases} \sgn(n) \, q^{(|n|-1)/2} \, [ G(|n|-1) - G(|n|+1) ] &\text{if } n \neq 0,\\
0 &\text{otherwise},
\end{cases}
\end{equation}
and $\tilde{\Sigma}_n$ is defined in \eqref{tildeSigma}.
\end{lemma}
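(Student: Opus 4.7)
The plan is to use the symmetry of $\cK$ to rewrite $\cS-\cS^*$ in a form whose integral kernel is easy to analyse, and then to carry out a case analysis on pairs $(x,y)$ governed by the confluent $x\wedge y$.

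First, since $K(x,y)=q^{-\ell(x)/2}\,G(d(x,y))\,q^{-\ell(y)/2}$ is symmetric in $x$ and $y$, the operator $\cK$ is self-adjoint on $L^2(\mu)$. Combined with $\nabla=\id_T-\Sigma$ and $\nabla^*=\id_T-\Sigma^*$, this gives
\[
\cS-\cS^* \;=\; \nabla\cK - \cK\nabla^* \;=\; \cK\Sigma^* - \Sigma\cK.
\]
Writing $\cK f(x)=\sum_{y\in T} q^{(\ell(y)-\ell(x))/2}\,G(d(x,y))\,f(y)$ (absorbing $\mu(y)=q^{\ell(y)}$ into the weight) and performing the change of summation variable $z=\pred(y)$ in $\cK\Sigma^* f(x)$, one sees that both $\Sigma\cK f(x)$ and $\cK\Sigma^* f(x)$ pick up a common factor $q^{-1/2}\,q^{(\ell(y)-\ell(x))/2}$, leading to the kernel identity
\[
(\cS-\cS^*)f(x) \;=\; q^{-1/2} \sum_{y\in T} q^{(\ell(y)-\ell(x))/2}\,\bigl[G(d(x,\pred(y)))-G(d(\pred(x),y))\bigr]\,f(y).
\]

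The crucial step is then a geometric case analysis based on the confluent $z=x\wedge y$. If $y=x$ both distances in the bracket equal $1$ and the term vanishes. If $z\notin\{x,y\}$, i.e., $y$ is neither an ancestor nor a descendant of $x$, then taking one step from either $x$ or $y$ toward the confluent shortens the distance by exactly one, so $d(x,\pred(y))=d(\pred(x),y)=d(x,y)-1$ and the bracket again vanishes. I expect this off-flow-line cancellation to be the main delicate point: it requires some attention to the sub-cases $d(y,z)=1$ versus $d(y,z)\geq 2$ (to ensure $\pred(y)$ lies on the same side of the confluent as $y$), but the geometric picture makes it inevitable. The only surviving contributions therefore come from $y$ comparable to $x$ under the partial order $\geq$.

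To conclude, the ancestor case $y=\pred^k(x)$ with $k\geq 1$ yields bracket $G(k+1)-G(k-1)$ and weight $q^{k/2}$, contributing $-q^{(k-1)/2}[G(k-1)-G(k+1)]\,\Sigma^k f(x)$. The descendant case, with $y< x$ and $d(x,y)=k\geq 1$, yields bracket $G(k-1)-G(k+1)$ and weight $q^{-k/2}$; using the identity $\sum_{y<x,\,d(x,y)=k}f(y) = q^k\,(\Sigma^*)^k f(x)$, which follows by iterating the definition of $\Sigma^*$, the summation over such $y$ collapses to $q^{(k-1)/2}[G(k-1)-G(k+1)]\,(\Sigma^*)^k f(x)$. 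Summing over $k\geq 1$ and recalling $\tilde\Sigma_{k}=\Sigma^{k}$ and $\tilde\Sigma_{-k}=(\Sigma^*)^{k}$ for $k\geq 1$, together with the relation $h(-k)=-h(k)$, these two contributions combine into $\sum_{n\neq 0} h(n)\,\tilde\Sigma_{-n}$ with $h$ exactly as in \eqref{eq: skewker}.
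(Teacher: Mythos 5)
Your argument is correct and follows essentially the same route as the paper's proof: use self-adjointness of $\cK$ to write $\cS-\cS^*=\cK\Sigma^*-\Sigma\cK$, compute the resulting kernel, observe that the bracket $G(d(x,\pred(y)))-G(d(\pred(x),y))$ vanishes unless $x$ and $y$ are comparable, and then sum the ancestor and descendant contributions into the $\tilde\Sigma_{\pm n}$ form. The off-flow-line cancellation you flag as delicate is exactly the step the paper dismisses as clear, and your confluent-based verification of it (including the subcase $\pred(y)=x\wedge y$) is sound, so there is no gap.
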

\begin{proof}
Since $G$ is real-valued, $\cK$ is self-adjoint, so
\[
  \cS - \cS^* = \nabla \cK - \cK \nabla^* = -\Sigma \cK + \cK \Sigma^*.
\]
More explicitly, for every function $f$ on $T$,
\[\begin{split}
  \Sigma \cK f(x)
		&=\sum_{y \in T} q^{(\ell(y)-\ell(\pred(x)))/2} \, G(d(\pred(x),y)) \, f(y) \\
		&=\sum_{y \in T} q^{(\ell(y)-\ell(x))/2-1/2} \, G(d(\pred(x),y)) \, f(y),
\end{split}\]
and
\[\begin{split}
  \cK \Sigma^* f(x)
		&= \sum_{y \in T} q^{(\ell(y)-\ell(x))/2-1} \, G(d(x,y))\sum_{z \in \succ(y)} f(z) \\
		&= \sum_{z \in T} q^{(\ell(\pred(z))-\ell(x))/2-1} \, G(d(x,\pred(z))) \, f(z) \\
    &= \sum_{y \in T} q^{(\ell(y)-\ell(x))/2-1/2} \, G(d(x,\pred(y))) \, f(y),
\end{split}\]
thus
\[
 (\cS-\cS^*) f(x) =
  -\sum_{y \in T} q^{(\ell(y)-\ell(x))/2-1/2} \, [ G(d(\pred(x),y) - G(d(x,\pred(y))) ] \, f(y),
\]
and clearly $G(d(\pred(x),y) - G(d(x,\pred(y)))$ vanishes if $x \not < y$ or $y \not <x$. So we can restrict the sum to the set $\{y \in T \colon \ y < x \text{ or } x < y\}$.

Define now, for every $n \in \NN$, the set
\begin{equation}\label{eq:descendants}
\succ^n(x) = \{y \le x \colon d(x,y) = n\}
\end{equation}
of $n$th-generation descendants of $x$. Then
\[\begin{split}
  -(\cS - \cS^*) f(x)
		&= \sum_{n >0} q^{(n-1)/2} \, [ G(n-1) - G(n+1) ] \, f(\pred^n(x))\\
		&\quad +\sum_{n>0} q^{-(n+1)/2} \, [ G(n+1) - G(n-1) ] \sum_{y \in \succ^n(x)} f(y) \\
		&=\sum_{n>0} q^{(n-1)/2} \, [ G(n-1) - G(n+1) ] \, (\Sigma^n - (\Sigma^*)^n) f(x),
\end{split}\]
as required.
\end{proof}

We are now ready to prove our main result, Theorem \ref{riesztr}.

\begin{proof}[Proof of Theorem \ref{riesztr}]
By \cite[Theorem 2.3]{hs} and Proposition \ref{p: gradient_equivalence}, the Riesz transform $\Rz$ is bounded on $L^p(\mu)$ for $p \in (1,2]$. Recall now from Corollary \ref{c: negsquareroot} that $\opL^{-1/2}$ is an integral operator with kernel $K(x,y) = q^{-\ell(x)/2} \, G(d(x,y)) \, q^{-\ell(y)/2}$, with $G$ as in \eqref{eq: G}.
By applying Lemma \ref{lem: radialintskew} to $\cK = \opL^{-1/2}$ we deduce that
\begin{equation}\label{eq:skewsymm_lifting}
 \Rz - \Rz^* = \nabla \opL^{-1/2} - \opL^{-1/2} \nabla^* = \sum_{n \in \ZZ} \tilde{k}^\ZZ(n) \, \tilde{\Sigma}_{-n};
\end{equation}
for the last identity we used \eqref{eq: Gdiff} and \eqref{eq: skewker}, together with the fact that $\tilde k^\ZZ$ is odd.

Since, by Proposition \ref{p: rieszZZ}, $\tilde k^\ZZ$ is in $\Cv^p(\ZZ)$ for every $p\in (1,\infty)$, by Theorem \ref{thm: transference} we deduce that $\Rz-\Rz^*$ is bounded on $L^p(\mu)$ for every $p\in (1,\infty)$. By difference, we conclude that $\Rz^*$ is bounded on $L^p(\mu)$ for $p\in (1,2]$, or equivalently, that $\Rz$ is bounded on $L^p(\mu)$ for $p\in [2,\infty)$, as required.
\end{proof}

\begin{remark}\label{r: openproblem}
The identity \eqref{eq:skewsymm_lifting} shows that, in the notation of Section \ref{s: lifting},
\[
\Rz - \Rz^* = \Phi^* (\id_\Omega \otimes \widetilde \Rz_\ZZ) \Phi = \Phi^* (\id_\Omega \otimes (\Rz_\ZZ - \Rz_\ZZ^*)) \Phi;
\]
in other words, via the lifting procedure, the skew-symmetric part of $\Rz$ corresponds to the skew-symmetric part of $\Rz_\ZZ$.
As discussed in Remark \ref{rem:transfernce}, while we know that $\Rz_\ZZ$ and $\widetilde \Rz_\ZZ$ are of weak type $(1,1)$, via our transference strategy we appear not to be able to prove a weak type $(1,1)$ result for the operator $\Rz^*$, which remains an open problem.
\end{remark}

\subsection{Negative endpoint result for \texorpdfstring{$\Rz$}{R}}\label{ss: counterexample}
Hardy and BMO spaces adapted to the space $(T,\mu)$ were introduced and studied in \cite{ATV2, ATV1, LSTV}. These spaces are useful to obtain endpoint results for singular operators for $p=1$ and $p=\infty$, respectively, thanks to their good interpolation properties.

Let us recall that any subset $F$ of $T$ is called an \emph{admissible trapezoid} if it is either a singleton or can be written as
\begin{equation*}
  F = F_{h'}^{h''}(x_0) \defeq \lbrace x \in T \colon x \leq x_0, \, \ell(x_0) - h'' < \ell(x) \leq \ell(x_0) - h'\rbrace,
\end{equation*}
where $x_0$ is some vertex and $h', h''$ are two positive integers such that $2 \leq h''/h' \leq 12$. We denote by $\trF$ the family of admissible trapezoids.

A $(1,\infty)$-atom on $(T,\mu)$ is a mean-zero function supported on an admissible trapezoid $F$ and bounded by $\mu(F)^{-1}$. The atomic Hardy space $H^1(\mu)$ is the space of functions $g \in L^1(\mu)$ such that $g = \sum_{j} \lambda_j a_j$, where the $a_j$ are $(1,\infty)$-atoms and $\lbrace \lambda_j \rbrace$ is an $\ell^1$ sequence of complex numbers. The dual space of $H^1(\mu)$ can be identified with the space $BMO(\mu)$ \cite[Theorem 4.10]{LSTV}, which is defined as the space of functions $f$ on $T$ for which $\sup_{F \in \trF} |f-f_F|_F < \infty$, where $f_F$ denotes the integral mean of a function $f$ on the set $F$ with respect to the measure $\mu$. In particular, there exists a constant $C \in (0,\infty)$ such that, for any $(1,\infty)$-atom $a$,
\begin{equation}\label{eq: dual paring}
  |\langle f, a \rangle| = \left| \sum_{x \in T} f(x) \, a(x) \, \mu(x) \right| \leq C \Vert f \Vert_{BMO(\mu)} \qquad \forall f \in BMO(\mu).
\end{equation}

Admissible trapezoids are used as base sets for extending the Calder\'on--Zygmund theory developed in \cite{hs} to trees with locally doubling flow measures, playing the role balls play in the classical theory. In particular, the following lemma holds.

\begin{lemma}[{\cite[Theorem 5.8]{LSTV}}]\label{lem: hormander}
Let $\cK$ be a linear operator which is bounded on $L^2(\mu)$ and admits a
kernel $K$ satisfying the condition
\begin{equation}\label{eq: hormander}
  \sup_{F \in \trF} \sup_{y,z \in F} \sum_{x \notin F^*} |K(x,y)-K(x,z)| \, \mu(x) < + \infty,
\end{equation}
where, for any $F=F_{h'}^{h''}(x_0) \in \trF$, we define $F^{*}=\{x \in T : d(x,F)<h' \}$. Then $\cK$ extends to an operator which is of weak type (1,1), bounded from $H^1(\mu)$ to $L^1(\mu)$ and on $L^p(\mu)$, for $p\in (1,2)$.
If the kernel $K$ satisfies the condition
\begin{equation}\label{eq: hormanderdual}
  \sup_{F \in \trF} \sup_{y,z \in F} \sum_{x \notin F^*} |K(y,x)-K(z,x)| \, \mu(x) < + \infty,
\end{equation}
then $\cK$ extends to an operator which is bounded from $L^{\infty}(\mu)$ to $BMO(\mu)$ and on $L^p(\mu)$, for $p\in (2,\infty)$.
\end{lemma}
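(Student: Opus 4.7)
The plan is to adapt the classical Calder\'on--Zygmund scheme to the nondoubling space $(T,\mu)$, with admissible trapezoids taking the role of balls. The main ingredient I would invoke is a Calder\'on--Zygmund decomposition built on $\trF$: given $f\in L^1(\mu)$ and $\lambda>0$, one produces a disjoint family $\{F_j\}\subset \trF$ and a splitting $f=g+\sum_j b_j$, where $g$ satisfies $\|g\|_{L^2(\mu)}^2\lesssim \lambda\|f\|_{L^1(\mu)}$, each $b_j$ is supported in $F_j$ with mean zero and $\|b_j\|_{L^1(\mu)}\lesssim \lambda\mu(F_j)$, and $\sum_j \mu(F_j)\lesssim \|f\|_{L^1(\mu)}/\lambda$. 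The key geometric inputs that make this work in the absence of doubling are the local doubling of $\mu$ on trapezoids and the uniform estimate $\mu(F^*)\lesssim \mu(F)$ for $F\in\trF$, which is precisely what forces the aspect-ratio constraint $2\le h''/h'\le 12$. Constructing this decomposition is the bulk of the technical work, and I would import it directly from \cite{hs, LSTV}.

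With the decomposition at hand, the weak type $(1,1)$ bound is standard: Chebyshev and the $L^2$-boundedness of $\cK$ handle the good part $g$, while for the bad part one estimates
\[
\mu\bigl(\{|\cK b|>\lambda/2\}\setminus\textstyle\bigcup_j F_j^*\bigr)\lesssim \frac{1}{\lambda}\sum_j\int_{T\setminus F_j^*}|\cK b_j|\,d\mu,
\]
and uses the cancellation $\sum_{y\in F_j} b_j(y)\,\mu(y)=0$ to replace $K(x,y)$ by the difference $K(x,y)-K(x,z_j)$ for a chosen base point $z_j\in F_j$, so that \eqref{eq: hormander} produces a uniform bound by $\|b_j\|_{L^1(\mu)}$; the excluded set $\bigcup_j F_j^*$ has $\mu$-measure $\lesssim \|f\|_{L^1(\mu)}/\lambda$ by the size estimate for $F^*$. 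For the $H^1(\mu)\to L^1(\mu)$ endpoint it suffices to control $\|\cK a\|_{L^1(\mu)}$ uniformly on $(1,\infty)$-atoms $a$ supported in some $F\in\trF$; I would split the sum into $F^*$ (handled by Cauchy--Schwarz, the $L^2$-bound, and the estimates $\|a\|_{L^2(\mu)}\lesssim \mu(F)^{-1/2}$ and $\mu(F^*)\lesssim\mu(F)$) and $T\setminus F^*$ (handled again via the mean-zero property of $a$ and \eqref{eq: hormander}). Marcinkiewicz interpolation between $L^2$ and weak-$L^1$ then produces the $L^p$-boundedness for $p\in(1,2)$.

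The ``dual'' half of the statement follows by duality. Condition \eqref{eq: hormanderdual} is precisely \eqref{eq: hormander} applied to the adjoint operator $\cK^*$, whose integral kernel is $\overline{K(y,x)}$, and $\cK^*$ is $L^2$-bounded whenever $\cK$ is. Applying the argument above to $\cK^*$ therefore yields that $\cK^*$ is of weak type $(1,1)$, bounded from $H^1(\mu)$ to $L^1(\mu)$, and bounded on $L^p(\mu)$ for $p\in(1,2)$; dualizing gives the $L^p$-boundedness of $\cK$ for $p\in(2,\infty)$, while the $L^\infty(\mu)\to BMO(\mu)$ bound is extracted from the $\cK^*:H^1(\mu)\to L^1(\mu)$ bound through the duality recorded in \eqref{eq: dual paring}. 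The main obstacle throughout is really the initial Calder\'on--Zygmund decomposition on $(T,\mu)$: once the nondoubling geometry has been tamed by the trapezoid family $\trF$ and the estimate $\mu(F^*)\lesssim\mu(F)$, as was done in \cite{hs, LSTV}, all remaining steps follow the classical script.
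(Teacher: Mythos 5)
Your proposal is correct and follows essentially the same route as the paper, which does not prove this lemma itself but quotes it from \cite[Theorem 5.8]{LSTV}: there the argument is exactly the Hebisch--Steger-style Calder\'on--Zygmund theory you describe, with the decomposition adapted to admissible trapezoids, the integral H\"ormander condition \eqref{eq: hormander} handling the bad part and the atoms, interpolation giving $p\in(1,2)$, and duality (via the adjoint kernel and $(H^1(\mu))^*=BMO(\mu)$) giving the second half.
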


It is known that $\Rz$ is bounded from $H^1(\mu)$ to $L^1(\mu)$ \cite[Section 4.3]{ATV1}. We show below that $\Rz$ does not map $L^\infty(\mu)$ into $BMO(\mu)$. This can be thought of as a discrete counterpart to the counterexamples in the continuous setting discussed in \cite[Section 4]{SV2}.

\begin{prop}\label{h1l1}
The Riesz transform $\Rz$ does not map $L^\infty(\mu)$ into $BMO(\mu)$.
\end{prop}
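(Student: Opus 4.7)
The plan is to argue by contradiction, combining the duality $(H^1(\mu))^* \simeq BMO(\mu)$ from \cite[Theorem 4.10]{LSTV} with the lifting identity
\[
\Rz - \Rz^* = \Phi^*(\id_\Omega \otimes \widetilde{\Rz}_\ZZ)\Phi
\]
of Remark \ref{r: openproblem}. Since $\Rz \colon H^1(\mu) \to L^1(\mu)$ is bounded (by \cite{ATV1} together with Proposition \ref{p: gradient_equivalence}), duality automatically yields boundedness of $\Rz^* \colon L^\infty(\mu) \to BMO(\mu)$. Hence, were $\Rz \colon L^\infty(\mu) \to BMO(\mu)$ bounded, so would be the skew-symmetric part $\Rz - \Rz^*$. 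Therefore it is enough to produce a sequence $f_N \in L^\infty(\mu) \cap L^2(\mu)$ with $\|f_N\|_{L^\infty(\mu)} = 1$ and $\|(\Rz - \Rz^*) f_N\|_{BMO(\mu)} \to \infty$.

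The candidate is $f_N \defeq \chi_{E_N}$, where $E_N \defeq \{x \in T \colon x \leq o,\ \ell(x) \geq -N\}$ is the finite set of descendants of $o$ up to depth $N$. A direct computation gives the product decomposition $\Phi f_N(\omega,n) = \chi_{\Omega_o}(\omega)\,\chi_{[-N,0]}(n)$, whence
\[
(\id_\Omega \otimes \widetilde{\Rz}_\ZZ)\Phi f_N(\omega,n) = \chi_{\Omega_o}(\omega)\, H_N(n), \qquad H_N \defeq \widetilde{\Rz}_\ZZ \chi_{[-N,0]}.
\]
Applying $\Phi^*$ via Proposition \ref{p: PhiPhi*} \ref{en:phiphistar} then yields
\[
(\Rz - \Rz^*) f_N(x) = \frac{\nu(\Omega_o \cap \Omega_x)}{\nu(\Omega_x)}\, H_N(\ell(x)).
\]
A short case analysis, based on the fact that $\Omega_o \cap \Omega_x$ is nonempty precisely when $o$ and $x$ lie on a common geodesic ray to $\myth$, shows that this prefactor equals $1$ when $x \leq o$, equals $q^{-\ell(x)}$ when $o \leq x$, and vanishes otherwise.

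To expose the $BMO$-defect I would evaluate the mean oscillation on the admissible trapezoid $F \defeq F_1^2(\pred(o)) = \succ(\pred(o))$, which has $\mu(F) = q$ and consists of $o$ together with its $q-1$ siblings. Each such sibling is incomparable to $o$ under $\leq$, so $(\Rz - \Rz^*) f_N$ vanishes there, while at $o$ itself the value is $H_N(0)$. A direct computation then gives $\mu$-mean $m_F = H_N(0)/q$ and mean-oscillation
\[
\frac{1}{\mu(F)} \sum_{x \in F} |(\Rz - \Rz^*) f_N(x) - m_F|\,\mu(x) = \frac{2(q-1)}{q^2}\, H_N(0).
\]
Finally, by Proposition \ref{p: rieszZZ} the convolution kernel $\tilde k^\ZZ(m)$ is positive for $m \geq 1$ and satisfies $\tilde k^\ZZ(m) > \frac{2\sqrt{2}}{\pi m}$, so $H_N(0) = \sum_{m=1}^N \tilde k^\ZZ(m) \gtrsim \log N$, contradicting the would-be $BMO$-bound. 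The main technical step is the case analysis for the prefactor $\nu(\Omega_o \cap \Omega_x)/\nu(\Omega_x)$: it concentrates the logarithmically diverging peak $H_N(0)$ at the single vertex $o$ inside an admissible trapezoid of bounded $\mu$-measure, a local obstruction that the tree-adapted $BMO$-geometry, with trapezoids crossing several branches through a single parent, cannot absorb.
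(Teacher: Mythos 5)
Your argument is correct, but it takes a genuinely different route from the paper's. The paper's proof is direct: it pairs $\Rz f$, for $f=\chi_{\{x\colon x\le x_1\}}\in L^\infty(\mu)$, against the atom $a=\delta_{x_1}-\delta_{x_2}$ and, using the explicit kernel of $\opL^{-1/2}$ from Corollary \ref{c: negsquareroot} together with the telescoping identity \eqref{eq: Gdiff}, shows that $\langle \Rz f,a\rangle=q^{-1}\sum_{n\ge 0}\tilde k^\ZZ(n+1)=+\infty$, contradicting \eqref{eq: dual paring}. You instead reduce to the skew-symmetric part: from the $H^1(\mu)\to L^1(\mu)$ boundedness of $\Rz$ and the duality $(H^1(\mu))^*\simeq BMO(\mu)$ you get that $\Rz^*$ maps $L^\infty(\mu)$ to $BMO(\mu)$, so any failure for $\Rz$ must already occur for $\Rz-\Rz^*=\Phi^*(\id_\Omega\otimes\widetilde\Rz_\ZZ)\Phi$, and you exhibit it quantitatively on the truncated cones $E_N=\{x\le o\colon \ell(x)\ge -N\}$. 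Your computations check out: $\Phi\chi_{E_N}=\chi_{\Omega_o}\otimes\chi_{[-N,0]}$, the case analysis for the prefactor $\nu(\Omega_o\cap\Omega_x)/\nu(\Omega_x)$ is right, the oscillation on $F_1^2(\pred(o))$ equals $\tfrac{2(q-1)}{q^2}H_N(0)$, and $H_N(0)=\sum_{m=1}^N\tilde k^\ZZ(m)\gtrsim \log N$ by positivity of $\tilde k^\ZZ$ on positive integers. Both proofs ultimately rest on the same divergence $\sum_n\tilde k^\ZZ(n)=+\infty$, but the trade-offs differ: the paper's argument is more self-contained (only the kernel formula and the sign of $\tilde k^\ZZ$ are needed, not the lifting machinery nor the $H^1\to L^1$ bound from \cite{ATV1}), whereas yours works entirely with functions in $L^2(\mu)\cap L^\infty(\mu)$ and finite quantities, thereby avoiding the interpretation of $\Rz f$ for an $f\notin L^2(\mu)$ via its kernel, and it quantifies the rate of failure ($\log N$). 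The only step you leave implicit is the identification of the Banach-space adjoint of $\Rz\colon H^1(\mu)\to L^1(\mu)$ with the $L^2(\mu)$-adjoint $\Rz^*$ on $L^\infty(\mu)\cap L^2(\mu)$, and the fact that bounded pairings against all $(1,\infty)$-atoms control the $BMO(\mu)$ norm; both are routine (and this is the same duality the paper itself invokes right after Proposition \ref{h1l1} to conclude that $\Rz^*$ is not bounded from $H^1(\mu)$ to $L^1(\mu)$), but each deserves a sentence in a complete write-up.
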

\begin{proof}
By \eqref{eq: dual paring} it is enough to exhibit a function $f\in L^\infty(\mu)$ and a $(1,\infty)$-atom $a$ such that the dual pairing $\langle \Rz f, a\rangle$ is not bounded. Consider the admissible trapezoid $F = F_1^2(o) = \succ(o)$, with $\mu(F) = 1$. Pick $x_1, x_2 \in F$ such that $x_1 \ne x_2$ and define the $(1,\infty)$-atom $a=\delta_{x_1}-\delta_{x_2}$. Let $f = \chi_{\{x \colon x\leq x_1\}}$. Then,
\begin{equation*}
\begin{split}
  \langle \Rz f, a\rangle
		&= \Rz f(x_1) \, \mu(x_1) - \Rz f(x_2) \, \mu(x_2) \\
    &= q^{-1} \, [ \opL^{-1/2} f(x_1) - \opL^{-1/2} f(x_2) ],
\end{split}
\end{equation*}
where we used that $\Rz = \nabla \opL^{-1/2} = (\id_T - \Sigma) \opL^{-1/2}$, $\mu(x_1) = \mu(x_2) = 1/q$ and the cancellation induced from the fact that $\pred(x_1)=\pred(x_2)$. From Corollary \ref{c: negsquareroot} and the fact that $\ell(x_1) = \ell(x_2) = -1$, we then deduce that
\[
\langle \Rz f, a\rangle = q^{-1/2} \sum_{y \colon y \leq x_1} q^{\ell(y)/2} \,  [ G(d(x_1,y)) - G(d(x_2,y)) ].
\]
Next, observe that whenever $y\leq x_1$, we have $d(y,x_2) = d(y,x_1)+2$, and $-\ell(y) = d(y,x_1)+1$, so
\[\begin{split}
\langle \Rz f, a\rangle
&= q^{-1/2} \sum_{y \colon y \leq x_1} q^{\ell(y)/2} \, [ G(d(x_1,y)) - G(d(x_1,y)+2) ] \\
&= q^{-1} \sum_{n \geq 0} q^{n/2} \, [ G(n) - G(n+2) ] \\
&= q^{-1} \sum_{n \geq 0} \tilde k^\ZZ(n+1) = +\infty ,
\end{split}\]
by \eqref{eq: Gdiff} and \eqref{eq:rieszZZformulas}, and we are done.
\end{proof}

\begin{remark}\label{oss:noHorm}
By Proposition \ref{h1l1}, we deduce that the integral kernel of $\Rz$ does not satisfy the dual H\"ormander condition \eqref{eq: hormanderdual}. Indeed, otherwise, Lemma \ref{lem: hormander} would imply the $L^{\infty}(\mu) \to BMO(\mu)$ boundedness of $\Rz$. Notice that this phenomenon is in sharp contrast with the well known endpoint results for the Euclidean Riesz transforms of the first order, as well as the ones for the discrete first-order Riesz transforms on $\ZZ$ and more general finitely generated abelian groups \cite[Section 8]{HSC}, and it shows why it was not possible to use condition \eqref{eq: hormanderdual} to study the $L^p$-boundedness of $\Rz$ for $p \in (2,\infty)$.
\end{remark}

By Proposition \ref{h1l1} we deduce that $\Rz^*$ is not bounded from $H^1(\mu)$ to $L^1(\mu)$.
As it is an open question (see Remark \ref{r: openproblem}) whether $\Rz^*$ is of weak type $(1,1)$, no positive endpoint results for $p=1$ and $\Rz^*$ appear to be available. This partially motivates the introduction in the following section of another natural class of Riesz transforms associated with the flow Laplacian on $(T,\mu)$, for which we are able to prove the $L^p$-boundedness for $p \in (1,\infty)$, but also weak type $(1,1)$ endpoint results both for the operator and its adjoint.

\section{Horizontal Riesz transforms}\label{s: horizontal}

Let $\varepsilon \in \CC^T$ be bounded and such that $\Sigma^* \varepsilon = 0$ on $T$; in other words, we require that
\[
\sum_{y \in \succ(x)} \varepsilon(y) = 0 \qquad\forall x \in T.
\]
For every function $f$ in $\CC^T$ we define the $\varepsilon$-horizontal gradient $\hnabla f$ as
\[
  \hnabla f(x)=\Sigma^*(\varepsilon f)(x) = \frac{1}{q} \sum_{y \in \succ(x)} \varepsilon(y)f(y) \qquad \forall x\in T.
\]
We summarize some properties of the $\varepsilon$-horizontal gradient in the following proposition.

\begin{prop}\label{p: epsilongradient}
The following hold:
\begin{enumerate}[label=(\roman*)]
  \item\label{en:epsilonsigma} $\hnabla^* f = \overline{\varepsilon} \, \Sigma f$ for all $f \in \CC^T$;
  \item\label{en:epsilonnabla} $\hnabla = \hnabla \, \nabla$;
  \item\label{en:epsilonlp} for any $p \in [1,\infty]$,
\begin{align*}
\|\hnabla^*\|_{L^p(\mu)\to L^p(\mu)} &\leq \|\varepsilon\|_{\infty},\\
\|\hnabla^*\|_{L^{1,\infty}(\mu) \to L^{1,\infty}(\mu)} &\leq \|\varepsilon\|_{\infty};
\end{align*}
\item\label{en:epsilonslp} for any $p \in [1,\infty]$,
\begin{align*}
\|\hnabla\|_{L^{p}(\mu)\to L^{p}(\mu)} &\leq \|\varepsilon\|_{\infty},\\
\|\hnabla\|_{L^{1,\infty}(\mu)\to L^{1,\infty}(\mu)} &\leq q \|\varepsilon\|_{\infty};
\end{align*}
  \item\label{en:epsilonorth} $\imm(\hnabla^*) \perp \imm(\Sigma)$;
  \item\label{en:epsiloninner} for all $f,g\in \CC^T$ and $m,n \in \NN$,
\begin{equation}\label{ortrel}
  \langle \Sigma^n \hnabla^* f, \Sigma^m \hnabla^* g \rangle = \delta_{nm} \langle \hnabla^* f, \hnabla^* g \rangle.
\end{equation}
\end{enumerate}
\end{prop}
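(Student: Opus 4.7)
The plan is to prove the six items in the stated order, leaning on Proposition \ref{SigmaSigma*} and the hypothesis $\Sigma^* \varepsilon = 0$. The key observation throughout is that $\hnabla$ factors as $\Sigma^* M_\varepsilon$, where $M_\varepsilon$ denotes pointwise multiplication by $\varepsilon$.

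First I would establish \ref{en:epsilonsigma} by noting that $M_\varepsilon^* = M_{\overline\varepsilon}$ with respect to the $L^2(\mu)$ pairing, so taking adjoints in $\hnabla = \Sigma^* M_\varepsilon$ immediately yields $\hnabla^* = M_{\overline\varepsilon} \Sigma$, i.e., $\hnabla^* f = \overline{\varepsilon}\, \Sigma f$. For \ref{en:epsilonnabla}, since $\nabla = \id_T - \Sigma$, it suffices to show $\hnabla \Sigma = 0$. Using Proposition \ref{SigmaSigma*}\ref{en:sigmasigmaproduct} with $f = \varepsilon$ and $g = f$, one has
\[
\hnabla(\Sigma f) = \Sigma^*(\varepsilon \, \Sigma f) = f \cdot \Sigma^* \varepsilon = 0,
\]
by the assumption $\Sigma^* \varepsilon = 0$.

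The bounds in \ref{en:epsilonlp} follow from \ref{en:epsilonsigma} together with the isometric property of $\Sigma$ on $L^p(\mu)$ and $L^{1,\infty}(\mu)$ from Proposition \ref{SigmaSigma*}\ref{en:sigmasigmaiso}: indeed $|\hnabla^* f| = |\varepsilon||\Sigma f| \leq \|\varepsilon\|_\infty |\Sigma f|$ pointwise, and both $L^p$ and $L^{1,\infty}$ quasi-norms respect pointwise domination. The bounds in \ref{en:epsilonslp} are just the factorization $\hnabla = \Sigma^* M_\varepsilon$ combined with $\|M_\varepsilon\|_{L^p \to L^p}, \|M_\varepsilon\|_{L^{1,\infty} \to L^{1,\infty}} \le \|\varepsilon\|_\infty$ and the $L^p$ (norm $1$) and $L^{1,\infty}$ (norm $\leq q$) bounds for $\Sigma^*$ from Proposition \ref{SigmaSigma*}\ref{en:sigmasigmabound}.

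For \ref{en:epsilonorth}, for any $f, g \in \CC^T$ (with appropriate summability so that the pairing makes sense), duality gives $\langle \hnabla^* f, \Sigma g \rangle = \langle f, \hnabla \Sigma g\rangle$, which vanishes by the identity $\hnabla \Sigma = 0$ proved in \ref{en:epsilonnabla}. Finally, for \ref{en:epsiloninner}, I would split into cases. When $n = m$, apply $\Sigma^* \Sigma = \id_T$ from Proposition \ref{SigmaSigma*}\ref{en:sigmasigmaproj} iteratively to get $(\Sigma^*)^n \Sigma^n = \id_T$, yielding
\[
\langle \Sigma^n \hnabla^* f, \Sigma^n \hnabla^* g\rangle = \langle (\Sigma^*)^n \Sigma^n \hnabla^* f, \hnabla^* g\rangle = \langle \hnabla^* f, \hnabla^* g\rangle.
\]
When $n \ne m$, say $n > m$, the same iterated identity gives
\[
\langle \Sigma^n \hnabla^* f, \Sigma^m \hnabla^* g\rangle = \langle \Sigma^{n-m} \hnabla^* f, \hnabla^* g\rangle,
\]
and since $n - m \geq 1$, the left factor lies in $\imm(\Sigma)$, so \ref{en:epsilonorth} forces the pairing to vanish. (The case $n < m$ is symmetric after taking complex conjugates.) This completes the plan.

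None of the steps presents a serious obstacle; the only point that requires care is ensuring all $L^2$ pairings are well-defined when invoking adjointness in \ref{en:epsilonorth}, which is a formal issue easily bypassed by restricting to finitely supported $f, g$ in the definition of the images and noting that all computations are pointwise identities.
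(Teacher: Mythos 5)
Your proposal is correct and follows essentially the same route as the paper: the factorization $\hnabla = \Sigma^* M_\varepsilon$, hence $\hnabla^* = \overline{\varepsilon}\,\Sigma$, combined with the boundedness/isometry properties of $\Sigma$ and $\Sigma^*$ and the identity $\Sigma^*(\varepsilon\,\Sigma f) = f\,\Sigma^*\varepsilon = 0$. The only (harmless) difference is organizational: you prove $\hnabla\Sigma = 0$ once and deduce both \ref{en:epsilonnabla} and \ref{en:epsilonorth} from it, whereas the paper verifies \ref{en:epsilonnabla} by a direct pointwise computation and uses that identity only for \ref{en:epsilonorth}.
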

\begin{proof}
Part \ref{en:epsilonsigma} is immediately deduced from the definitions, as
\[
\hnabla^* = (\Sigma^* \varepsilon)^* = \overline{\varepsilon} \, \Sigma,
\]
where $\varepsilon$ and $\overline{\varepsilon}$ are thought of as multiplication operators.

As for part \ref{en:epsilonnabla}, for any function $f \in \CC^T$, since $\Sigma^* \varepsilon=0$,
\[
  \hnabla f(x) = \frac{1}{q} \sum_{y \in \succ(x)} \varepsilon(y) (f(y)-f(x)) = \hnabla \nabla f(x).
\]

Part \ref{en:epsilonlp} follows from Proposition \ref{SigmaSigma*} \ref{en:sigmasigmaiso} and the fact that $\hnabla^* = \overline\varepsilon \, \Sigma$. Similarly, part \ref{en:epsilonslp} follows from Proposition \ref{SigmaSigma*} \ref{en:sigmasigmabound} and the fact that $\hnabla = \Sigma^* \, \varepsilon$.

Now, for every function $f \in \CC^T$,
\[
  \hnabla \Sigma f = \Sigma^*( \varepsilon \, \Sigma f) = f \, \Sigma^* \varepsilon = 0,
\]
by Proposition \ref{SigmaSigma*} \ref{en:sigmasigmaproduct} and the assumption $\Sigma^* \varepsilon = 0$ on $\varepsilon$. This proves part \ref{en:epsilonorth}.

The orthogonality relation \ref{en:epsiloninner} is a consequence of \ref{en:epsilonorth} and the fact that $\Sigma$ is an isometric embedding on $L^2(\mu)$, by Proposition \ref{SigmaSigma*} \ref{en:sigmasigmaiso}.
\end{proof}

From the above proposition, we obtain an $L^2$-boundedness result for a class of operators on $(T,\mu)$.
The following result should be compared to the case $p=2$ of Theorem \ref{thm: transference}, where a similar class of operators is considered. Crucially, here we do not require that the sequence $F$ in the definition of the operator (see \eqref{def: P} below), once extended by zeros, is an $L^2$-convolutor on $\ZZ$, but only that it is square-summable. In other words, here we do not require any cancellations from $F$; the required cancellations yielding the $L^2$-boundedness of the resulting operator are instead provided by the orthogonality relations \eqref{ortrel}.

\begin{prop}\label{p: mathcal P}
Let $\cP$ be the linear operator on $L^2(\mu)$ defined by
\begin{equation}\label{def: P}
  \cP f = \sum_{n \geq 0} F(n) \,\Sigma^n \hnabla^* f
\end{equation}
for every $f \in L^2(\mu)$, where $F \in \ell^2(\NN)$. Then, $\cP$ is bounded on $L^2(\mu)$, with
\[
\|\cP\|_{L^2(\mu) \to L^2(\mu)} \leq \| F \|_{\ell^2(\NN)} \, \|\varepsilon\|_{\infty}.
\]
\end{prop}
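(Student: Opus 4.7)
The plan is to exploit the Pythagorean-style orthogonality relation in Proposition \ref{p: epsilongradient} \ref{en:epsiloninner}, which says that the vectors $\{\Sigma^n \hnabla^* f\}_{n \geq 0}$ are pairwise orthogonal in $L^2(\mu)$ with common norm $\|\hnabla^* f\|_{L^2(\mu)}$. This immediately suggests a Parseval-type identity for $\|\cP f\|_{L^2(\mu)}^2$, which, combined with the elementary operator norm bound for $\hnabla^*$ coming from part \ref{en:epsilonlp}, will yield the stated estimate.

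First I would verify that the series defining $\cP f$ converges in $L^2(\mu)$. Given $f \in L^2(\mu)$, part \ref{en:epsilonlp} ensures $\hnabla^* f \in L^2(\mu)$ with $\|\hnabla^* f\|_{L^2(\mu)} \leq \|\varepsilon\|_\infty \|f\|_{L^2(\mu)}$. Using \eqref{ortrel}, for any $0 \leq M \leq N$ the partial sum satisfies
\[
\Bigl\| \sum_{n=M}^{N} F(n) \, \Sigma^n \hnabla^* f \Bigr\|_{L^2(\mu)}^2 = \sum_{n=M}^{N} |F(n)|^2 \, \|\hnabla^* f\|_{L^2(\mu)}^2,
\]
which tends to zero as $M \to \infty$ since $F \in \ell^2(\NN)$. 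Hence the partial sums are Cauchy in $L^2(\mu)$, and $\cP f$ is a well-defined element of $L^2(\mu)$.

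Passing to the limit in the above identity (using continuity of the norm) gives
\[
\|\cP f\|_{L^2(\mu)}^2 = \sum_{n \geq 0} |F(n)|^2 \, \|\hnabla^* f\|_{L^2(\mu)}^2 = \|F\|_{\ell^2(\NN)}^2 \, \|\hnabla^* f\|_{L^2(\mu)}^2,
\]
and inserting the bound $\|\hnabla^* f\|_{L^2(\mu)} \leq \|\varepsilon\|_\infty \|f\|_{L^2(\mu)}$ yields the desired inequality.

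I do not expect any serious obstacle: once the orthogonality relation \eqref{ortrel} is in hand, the argument is essentially forced and reduces to a Parseval-type computation. The only minor subtlety, which I would address up front as indicated, is ensuring the convergence of the defining series before manipulating infinite sums. Notably, the proof does not require any cancellation from $F$ itself (beyond being $\ell^2$-summable), as all the needed cancellation is encoded in the mutual orthogonality of the vectors $\Sigma^n \hnabla^* f$ coming from the structural property $\Sigma^* \varepsilon = 0$.
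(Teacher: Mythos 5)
Your argument is correct and follows essentially the same route as the paper: the orthogonality relation \eqref{ortrel} gives the Parseval-type identity $\|\cP f\|_{L^2(\mu)}^2 = \|F\|_{\ell^2(\NN)}^2 \, \|\hnabla^* f\|_{L^2(\mu)}^2$, and the bound on $\hnabla^*$ from Proposition \ref{p: epsilongradient} finishes the proof. Your preliminary check that the partial sums are Cauchy is a sound (if brief) addition that the paper leaves implicit.
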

\begin{proof}
Let $f$ be a function in $L^2(\mu)$. By \eqref{ortrel},
\[
  \|\cP f\|_{L^2(\mu)}^2 = \sum_{n \ge 0}|F(n)|^2 \, \| \hnabla^* f \|_{L^2(\mu)}^2 = \|F\|_{\ell^2(\NN)}^2 \, \|\hnabla^* f\|_{L^2(\mu)}^2,
\]
hence, by Proposition \ref{p: epsilongradient},
\[
 \|\cP f\|_{L^2(\mu)} \le \|F\|_{\ell^2(\NN)} \, \|\varepsilon\|_{\infty} \,  \|f\|_{L^2(\mu)},
\]
as desired.
\end{proof}

Interestingly enough, an adaptation of the above strategy also allows us to deduce the weak type $(1,1)$ boundedness of an operator of the form \eqref{def: P} from a non-cancellative assumption on $F$. The proof of the result below is significantly inspired by that of \cite[Theorem 3]{GS}.

\begin{theorem} \label{th:weak1P}
Let $\cP$ be as in \eqref{def: P} with $F \in \ell^{1,\infty}(\NN)$. Then, for all $f \in \CC^T$ and $\lambda > 0$,
\begin{equation}\label{wt1}
  \mu(\{|\cP f|>\lambda\}) \le 3 \, \|F\|_{\ell^{1,\infty}(\NN)} \, \|\varepsilon\|_{\infty} \frac{\|f\|_{L^1(\mu)}}{\lambda}.
\end{equation}
\end{theorem}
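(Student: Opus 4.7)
The strategy is inspired by Gaudry and Sj\"ogren's work \cite{GS} on horizontal Riesz transforms in the continuous setting. A key initial observation is that $\ell^{1,\infty}(\NN)$ embeds continuously into $\ell^2(\NN)$: writing $A = \|F\|_{\ell^{1,\infty}(\NN)}$, the decreasing rearrangement satisfies $F^*(n) \leq A/(n+1)$, so $\|F\|_{\ell^2(\NN)} \leq A\sqrt{\pi^2/6}$. By Proposition~\ref{p: mathcal P}, this already promotes $\cP$ to an $L^2(\mu)$-bounded operator of norm at most $c_0 A\|\varepsilon\|_\infty$ for a universal constant $c_0$. A routine reduction, at the cost of an overall numerical factor, lets us assume that $f$ is real and nonnegative.

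I would next apply the Calder\'on--Zygmund decomposition on $(T,\mu)$ developed in \cite{hs,LSTV}, but at the \emph{rescaled} threshold $\tau = \lambda/(c\, A\|\varepsilon\|_\infty)$, with $c$ a universal constant to be chosen large enough. This yields $f = g + b$ with $\|g\|_{L^\infty(\mu)} \lesssim \tau$, $\|g\|_{L^1(\mu)} \leq \|f\|_{L^1(\mu)}$, and $b = \sum_Q b_Q$ supported on pairwise disjoint admissible trapezoids $Q$ with $\int b_Q \, d\mu = 0$, $\|b_Q\|_{L^1(\mu)} \lesssim \tau \mu(Q)$, and $\sum_Q \mu(Q) \lesssim \|f\|_{L^1(\mu)}/\tau \lesssim A\|\varepsilon\|_\infty \|f\|_{L^1(\mu)}/\lambda$. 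Writing
\[
\{|\cP f| > \lambda\} \subseteq E^* \cup \{|\cP g| > \lambda/2\} \cup \bigl((E^*)^c \cap \{|\cP b|>\lambda/2\}\bigr),
\]
where $E^* = \bigcup_Q Q^*$ is the dilated bad set, produces three summands whose measures will contribute the three units of the target constant. The bad set $E^*$ has measure $\lesssim A\|\varepsilon\|_\infty \|f\|_{L^1(\mu)}/\lambda$ by construction. For the good part $\cP g$, combining $\|g\|_{L^2(\mu)}^2 \leq \|g\|_\infty \|g\|_1 \lesssim \tau \|f\|_{L^1(\mu)}$ with the $L^2$-bound on $\cP$ and Chebyshev's inequality gives, for $c$ large enough,
\[
\mu\{|\cP g| > \lambda/2\} \lesssim \frac{A^2\|\varepsilon\|_\infty^2 \tau \|f\|_{L^1(\mu)}}{\lambda^2} \lesssim \frac{A\|\varepsilon\|_\infty \|f\|_{L^1(\mu)}}{\lambda}.
\]

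The hard part, and main obstacle, is the third summand: controlling $\cP b$ outside $E^*$. The integral kernel of $\cP$ does \emph{not} in general satisfy the integral H\"ormander condition \eqref{eq: hormander} (an analogue of the negative result Proposition~\ref{p:horiz_h1l1} for $\Rz_\varepsilon^*$), so Lemma~\ref{lem: hormander} cannot be invoked off the shelf, and a bespoke argument is needed. The idea is to exploit in tandem the two cancellations available: the vanishing $\mu$-integral of each $b_Q$ on its trapezoid $Q$, and the horizontal cancellation $\Sigma^*\hnabla^* = 0$ (Proposition~\ref{p: epsilongradient}\ref{en:epsilonorth}), which forces each $\Sigma^n \hnabla^* b_Q$ to have vanishing $\mu$-integral on every sibling set $\succ^{n+1}(y)$. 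A careful pairing of these two cancellations with the geometry of admissible trapezoids---the first addressing the vertical structure of the kernel of $\cP$, the second its horizontal spread along horocycles---should produce a uniform estimate of the form
\[
\int_{(Q^*)^c} |\cP b_Q| \, d\mu \lesssim A \|\varepsilon\|_\infty \|b_Q\|_{L^1(\mu)},
\]
whose summation over $Q$ together with $\sum_Q \|b_Q\|_{L^1(\mu)} \lesssim \|f\|_{L^1(\mu)}$ yields the third and final unit. This is the most delicate step and the one most directly parallel to the analysis carried out by Gaudry and Sj\"ogren in the $ax+b$-group setting.
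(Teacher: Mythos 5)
Your reduction hides the entire difficulty in the bad part, and the key estimate you postulate there, namely $\int_{(Q^*)^c}|\cP b_Q|\,d\mu \lesssim \|F\|_{\ell^{1,\infty}(\NN)}\|\varepsilon\|_\infty\|b_Q\|_{L^1(\mu)}$ uniformly over mean-zero $b_Q$ supported on admissible trapezoids, is not just unproven (``should produce'') but actually false for general $F \in \ell^{1,\infty}(\NN)$. Test it on $b_Q$ proportional to the two-point function $a=\delta_{x_1}-\delta_{x_2}$ of Proposition \ref{p:horiz_h1l1}, with $x_1,x_2$ distinct elements of $\succ(\bar x)$ and $Q=\succ(\bar x)$: since $\hnabla^*\delta_{x_i}=\overline{\varepsilon}\,\chi_{\succ(x_i)}$, the functions $\Sigma^n\hnabla^*\delta_{x_1}$ and $\Sigma^m\hnabla^*\delta_{x_2}$ live on descendants of $x_1$ and of $x_2$ at depths $n+1$ and $m+1$, so all these pieces have pairwise disjoint supports, lying outside $Q^*$ for all but finitely many indices, and no cancellation between the two point masses ever occurs. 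As $\Sigma$ preserves $L^1(\mu)$-norms, one gets $\int_{(Q^*)^c}|\cP a|\,d\mu \gtrsim \sum_{n}|F(n)|\,\|\hnabla^* a\|_{L^1(\mu)}$, which diverges whenever $F\notin\ell^1(\NN)$ (e.g.\ $F(n)=1/(n+1)$, or $F=\tilde k^\ZZ(1+\cdot)$ as in Theorem \ref{t: Respilonstar}). This is precisely the content of Proposition \ref{p:horiz_h1l1}: a uniform off-support $L^1$ bound on mean-zero functions on trapezoids is essentially the H\"ormander-type condition \eqref{eq: hormander} for the kernel of $\cP$, i.e.\ an $H^1(\mu)\to L^1(\mu)$ bound, and it fails here. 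The vanishing $\mu$-mean of $b_Q$ cannot help because $\cP$ transports the masses at distinct points of $Q$ into disjoint subtrees, and the horizontal cancellation $\Sigma^*\varepsilon=0$ is already consumed in forming $\hnabla^*$; so the Calder\'on--Zygmund good/bad route cannot be closed as outlined.

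The paper argues quite differently, following Gaudry--Sj\"ogren more literally: there is no decomposition of the space, but an $n$-dependent splitting $f=f_n+\tilde f_n$ with $f_n=f\chi_{\{|F(n)f|>\lambda\}}$, applied separately to each term of \eqref{def: P}. The terms involving $f_n$ are controlled by support and measure considerations alone: $\{\Sigma^n\hnabla^* f_n\neq 0\}\subseteq \pred^{-n-1}\{|F(n)f|>\lambda\}$, the flow property gives $\mu(\pred^{-k}(E))=\mu(E)$, and Fubini together with $\#\{n\colon |F(n)f(x)|>\lambda\}\le \|F\|_{\ell^{1,\infty}(\NN)}|f(x)|/\lambda$ yields one unit of the constant. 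The terms involving $\tilde f_n$ are handled by Chebyshev in $L^2$, where the orthogonality relations \eqref{ortrel} diagonalize the square of the sum, and the truncation $|F(n)\Sigma f|\le\lambda$ combined with $\sum_n |F(n)|^2\chi_{\{|F(n)|\le\lambda\}}\le 2\lambda\|F\|_{\ell^{1,\infty}(\NN)}$ gives the remaining two units. Your preliminary observations (the inclusion $\ell^{1,\infty}\subset\ell^2$ and the $L^2$ bound of Proposition \ref{p: mathcal P}, used on the good part) are correct but do not reach the heart of the matter, which is exactly this interplay between the scale-dependent truncation and \eqref{ortrel}.
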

\begin{proof}
Note that $\hnabla^*$ and $\cP$ depend $\RR$-linearly on $\varepsilon$; hence, without loss of generality, we may assume that $\|\varepsilon\|_\infty = 1$.

Let $\lambda>0$ and $f \in L^{1}(\mu)$. For any $n \in \NN$, decompose $f=f_n+\tilde{f}_n$ where $f_n = f \chi_{\{|F(n)f|>\lambda\}}$. Then,
\begin{multline}\label{eq:firstsplit}
\mu(\{|\cP f|>\lambda\}) \\
\le \mu\biggl(\biggl\{\biggl|\sum_{n \ge 0} F(n) \Sigma^n \hnabla^* f_n \biggr| > 0\biggr\} \biggr)
+\mu\biggl(\biggl\{\biggl| \sum_{n \geq 0} F(n) \Sigma^n \hnabla^* \tilde{f}_n \biggr| > \lambda\biggr\} \biggr).
\end{multline}

Now, $\{f_n \ne 0\} = \{|F(n)f| > \lambda\}$ and
\begin{equation}\label{eq:set_containments}
\begin{split}
  \{ \Sigma^n \hnabla^*f_n \ne 0\}
		&= \pred^{-n} \{ \hnabla^* f_n \ne 0\}
		= \pred^{-n} \{ \overline\varepsilon \, \Sigma f_n \ne 0\} \\
		&\subseteq \pred^{-n-1} \{f_n \ne 0\}
		= \pred^{-n-1} \{|F(n)f| > \lambda\},
\end{split}
\end{equation}
whence
\begin{equation}\label{stimamenoprecisa}
\begin{split}
    \mu\biggl( \biggl\{ \biggl|\sum_{n \ge 0} F(n) \Sigma^n \hnabla^* f_n \biggr| > 0 \biggr\} \biggr)
		&\le \sum_{n \ge 0} \mu(\{ \Sigma^n \hnabla^* f_n \ne 0 \}) \\
		&\le \sum_{n \ge 0} \mu(\{ |F(n)f| > \lambda \});
\end{split}
\end{equation}
in the last inequality we used \eqref{eq:set_containments} and the fact that, since $\mu$ is a flow measure,
\[
\mu(\pred^{-k}(E)) = \mu(E)
\]
for any $E \subset T$ and $k \in \NN$. On the other hand, by Fubini's Theorem,
\begin{equation}\label{eq:firstpart2}
\begin{split}
    \sum_{n \geq 0} \mu(\{ |F(n)f| > \lambda \})
		&= \sum_{x \in T} \mu(x) \,\# \{n \in \NN \colon |F(n)f(x)| > \lambda\} \\
		&\le \frac{\|F\|_{\ell^{1,\infty}(\NN)}}{\lambda} \sum_{x \in T} \mu(x) \, |f(x)|\\
		&= \frac{\|F\|_{\ell^{1,\infty}(\NN)}}{\lambda} \|f\|_{L^1(\mu)}.
\end{split}
\end{equation}

For the remaining part, Chebyshev's inequality and \eqref{ortrel} imply that
\begin{multline}\label{eq:secondpart1}
  \mu\biggl(\biggl\{ \biggl| \sum_{n \ge 0} F(n) \Sigma^n \hnabla^* \tilde{f}_n \biggr| > \lambda \biggr\} \biggr)
		\le \frac{1}{\lambda^2} \biggl\| \sum_{n \geq 0} F(n) \Sigma^n \hnabla^* \tilde{f}_n \biggr\|_{L^2(\mu)}^2 \\
		= \frac{1}{\lambda^2} \sum_{n,m \geq 0} F(n) \overline{F(m)} \, \langle \Sigma^n \hnabla^* \tilde{f}_n, \Sigma^m \hnabla^* \tilde{f}_m \rangle
		= \frac{1}{\lambda^2} \sum_{n \geq 0} |F(n)|^2 \|\hnabla^* \tilde{f}_n\|_{L^2(\mu)}^2.
\end{multline}
We now observe that, for all $n \in \NN$,
\[
  \hnabla^* \tilde{f}_n = \overline{\varepsilon} \, \Sigma \tilde{f}_n = \overline{\varepsilon} \, (\Sigma f) \, \chi_{\{|F(n)\Sigma f| \le \lambda\}};
\]
hence $| \hnabla^* \tilde{f}_n | \le |\Sigma f| \, \chi_{\{|F(n) \Sigma f| \le \lambda\}}$ (recall that $\|\varepsilon\|_{\infty} = 1$), and therefore
\[
  \|\hnabla^* \tilde{f}_n\|_{L^2(\mu)}^2
		= \sum_{x \in T} \mu(x) \, |\hnabla^* \tilde{f}_n(x)|^2
		\le \sum_{x \colon \Sigma f(x) \ne 0} \mu(x) \, |\Sigma f(x)|^2 \chi_{\{|F(n)\Sigma|\le \lambda\}}(x).
\]
Thus
\begin{equation}\label{eq:secondpart2}
\begin{split}
  \sum_{n \geq 0} |F(n)|^2 \|\hnabla^* \tilde{f}_n\|_{L^2(\mu)}^2
		&\le \sum_{x \colon \Sigma f(x) \ne 0} \mu(x) \, |\Sigma f(x)|^2 \sum_{n \colon |F(n)\Sigma f(x)|\le \lambda} |F(n)|^2 \\
    &\le 2 \lambda \|F\|_{\ell^{1,\infty}(\NN)}  \sum_{x \in T} \mu(x) \, |\Sigma f(x)| \\
		&= 2 \lambda \|F\|_{\ell^{1,\infty}(\NN)} \, \|\Sigma f\|_{L^1(\mu)};
\end{split}
\end{equation}
in the last inequality we used the fact that, for all $\lambda > 0$,
\[\begin{split}
    \sum_{n\ge 0} |F(n)|^2 \chi_{\{|F(n)|\le \lambda\}}
		&=\int_0^\infty \# \{n \in \NN \colon |F(n)|^2 \chi_{\{|F(n)|\le \lambda\}}>\alpha\} \,d\alpha \\
		&\le \int_0^{\lambda^2} \# \{n \in \NN \colon |F(n)|>\alpha^{1/2}\} \,d\alpha \\
		&\le\|F\|_{\ell^{1,\infty}(\NN)}\int_0^{\lambda^2}\alpha^{-1/2} \,d\alpha\\
		&=2\lambda \|F\|_{\ell^{1,\infty}(\NN)}.
\end{split}\]

The desired estimate follows by combining \eqref{eq:firstsplit}, \eqref{stimamenoprecisa}, \eqref{eq:firstpart2}, \eqref{eq:secondpart1} and \eqref{eq:secondpart2}.
\end{proof}

The relevance of the above bounds is made clear by the following computation, which should be compared to Lemma \ref{lem: radialintskew}.

\begin{lemma}\label{lem:halfkernel}
Let $\cK$ be an integral operator on $(T,\mu)$ whose integral kernel has the form
\[
K(x,y) = q^{-\ell(x)/2} \, G(d(x,y)) \, q^{-\ell(y)/2}
\]
for some $G : \NN \to \CC$. Then,
\[
\cK \hnabla^* = \sum_{n \geq 0} q^{n/2} \,[ G(n) - G(n+2) ] \, \Sigma^n \, \hnabla^*.
\]
\end{lemma}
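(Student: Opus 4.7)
The plan is to unfold $\cK\hnabla^* f(x)$ directly using the explicit form of the kernel $K$ and the identity $\hnabla^* f = \overline\varepsilon \, \Sigma f$ from Proposition \ref{p: epsilongradient}\ref{en:epsilonsigma}, and then exploit the cancellation $\Sigma^*\varepsilon = 0$, i.e. $\sum_{y\in\succ(z)}\overline{\varepsilon(y)} = 0$, to reduce the resulting double sum to a sum indexed by the ancestors $\pred^n(x)$.

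First I would write
\[
\cK\hnabla^* f(x) = \sum_{y\in T} K(x,y) \, \overline{\varepsilon(y)} \, f(\pred(y)) \, \mu(y) = q^{-\ell(x)/2}\sum_{y\in T} q^{\ell(y)/2}\, G(d(x,y)) \,\overline{\varepsilon(y)}\, f(\pred(y))
\]
and substitute $z = \pred(y)$, so that the sum over $y$ becomes a double sum over $z\in T$ and $y\in\succ(z)$, with $\ell(y)=\ell(z)-1$. The expression becomes
\[
\cK\hnabla^* f(x) = q^{-\ell(x)/2}\sum_{z\in T} q^{(\ell(z)-1)/2}\, f(z)\, S_z(x),\qquad S_z(x) \defeq \sum_{y\in\succ(z)} G(d(x,y))\, \overline{\varepsilon(y)}.
\]

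The core step is to evaluate $S_z(x)$ via case analysis on the position of $x$ relative to $z$. If $x\not\le z$ or $x=z$, then $d(x,y)$ takes the same value (namely $d(x,z)+1$ or $1$ respectively) for every $y\in\succ(z)$, so $S_z(x) = G(d(x,y))\sum_{y\in\succ(z)}\overline{\varepsilon(y)} = 0$ by the horizontality hypothesis. If instead $x<z$, let $y_0$ be the unique vertex in $\succ(z)$ lying on the geodesic from $x$ to $z$, i.e.\ $y_0 = \pred^{d(x,z)-1}(x)$; then $d(x,y_0)=d(x,z)-1$ while $d(x,y)=d(x,z)+1$ for all other $y\in\succ(z)$, and using again $\sum_{y\in\succ(z)}\overline{\varepsilon(y)}=0$ one obtains
\[
S_z(x) = \overline{\varepsilon(y_0)} \bigl[ G(d(x,z)-1) - G(d(x,z)+1) \bigr].
\]

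Thus only ancestors of $x$ contribute, and parametrising $z = \pred^{n}(x)$ for $n\ge 1$ (so that $\ell(z)=\ell(x)+n$, $d(x,z)=n$ and $y_0 = \pred^{n-1}(x)$), after cancelling the $q^{-\ell(x)/2}$ factors and reindexing with $m=n-1$, one arrives at
\[
\cK\hnabla^* f(x) = \sum_{m\ge 0} q^{m/2}\bigl[G(m)-G(m+2)\bigr] \, \overline{\varepsilon(\pred^m(x))} \, f(\pred^{m+1}(x)).
\]
Recognising $\overline{\varepsilon(\pred^m(x))}\, f(\pred^{m+1}(x)) = \Sigma^m\bigl(\overline\varepsilon\,\Sigma f\bigr)(x) = \Sigma^m\hnabla^* f(x)$ yields the claimed identity.

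The only delicate point is the case analysis producing $S_z(x)$; it is entirely elementary but requires keeping track of three geometric configurations and of the fact that the cancellation provided by $\Sigma^*\varepsilon=0$ is precisely what forces non-ancestor contributions to vanish. Everything else is bookkeeping of levels and indices.
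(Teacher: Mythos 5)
Your proposal is correct and follows essentially the same route as the paper: both unfold the kernel, use $\hnabla^* f=\overline{\varepsilon}\,\Sigma f$, and exploit the cancellation $\Sigma^*\varepsilon=0$ over groups of siblings, so that only the ancestors $\pred^n(x)$ survive and produce the differences $G(n)-G(n+2)$. Your grouping by $z=\pred(y)$ with a case analysis on the position of $x$ is just a mild reorganization of the paper's split into $\pred(y)>x$ versus $\pred(y)\not>x$, and the bookkeeping of levels and distances checks out.
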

\begin{proof}
For all $f \in \CC^T$ and $x \in T$,
\begin{equation}\label{canc}
\begin{split}
 \cK \hnabla^* f(x)
	&= \sum_{y \colon \pred(y)>x} q^{-\ell(x)/2} \, G(d(x,y)) \, q^{\ell(y)/2} \, \overline\varepsilon(y) \, f(\pred(y)) \\
	&\quad +\sum_{y \colon \pred(y)\not>x} q^{-\ell(x)/2} \, G(d(x,y)) \, q^{\ell(y)/2} \, \overline\varepsilon(y)f(\pred(y)).
\end{split}
\end{equation}
The second sum in \eqref{canc} is equal to zero: indeed, if $\pred(y) \not> x$, then $d(x,y) = d(x,\pred(y))+1$, thus
\begin{multline*}
\sum_{y \colon \pred(y)\not>x} q^{-\ell(x)/2} \, G(d(x,y)) \, q^{\ell(y)/2} \, \overline\varepsilon(y)f(\pred(y)) \\
= \sum_{z \colon z\not>x} q^{-\ell(x)/2} \, G(d(x,z)+1) \, q^{(\ell(z)-1)/2} f(z) \sum_{y \in \succ(z)} \overline\varepsilon(y)
\end{multline*}
and $\Sigma^*\overline\varepsilon=0$. It follows that
\begin{equation}\label{eq:firstpartkernelsemi}
 \cK \hnabla^* f(x)
  = \sum_{z \colon z \ge x} q^{-\ell(x)/2} \left[\sum_{y \colon \pred(y)=\pred(z)} G(d(x,y)) \, \overline\varepsilon(y) \right] \, q^{\ell(z)/2} \, f(\pred(z)) .
\end{equation}
We now observe that, for all $z,y \in T$, if $z \geq x$ and $z \neq y \in \succ(\pred(z))$, then $d(x,y) = d(x,z)+2$, and moreover
\[
 \sum_{y \in \succ(\pred(z)), y \ne z}\overline\varepsilon(y) = -\overline\varepsilon(z),
\]
because $\Sigma^* \varepsilon = 0$; as a consequence,
\[
\sum_{y \colon \pred(y)=\pred(z)} G(d(x,y)) \, \overline\varepsilon(y) = \overline\varepsilon(z) \, [G(d(x,z))-G(d(x,z)+2)].
\]
From \eqref{eq:firstpartkernelsemi} we then deduce that
\[\begin{split}
 \cK \hnabla^* f(x)
  &= \sum_{z \colon z \ge x} q^{-\ell(x)/2} \, [G(d(x,z))-G(d(x,z)+2)] \, q^{\ell(z)/2} \, \overline\varepsilon(z) \, f(\pred(z)) \\
  &= \sum_{n \geq 0} q^{n/2} \, [G(n)-G(n+2)] \, \hnabla^* f(\pred^n(x)),
\end{split}\]
as desired.
\end{proof}

As discussed in the introduction, we define the $\varepsilon$-horizontal Riesz transform by
\[
\Rz_\varepsilon = \hnabla \, \opL^{-1/2}.
\]
From Proposition \ref{p: epsilongradient} \ref{en:epsilonnabla} we deduce that
\[
\Rz_\varepsilon = \hnabla \, \Rz.
\]
Since $\hnabla$ is bounded on $L^{1,\infty}(\mu)$ and on $L^p(\mu)$ for every $p \in [1,\infty]$ (see Proposition \ref{p: epsilongradient} \ref{en:epsilonlp}), any weak type $(1,1)$ and $L^p$-boundedness property for $\Rz$ transfers to $\Rz_\varepsilon$. In particular, from Theorem \ref{riesztr} we deduce that $\Rz_\varepsilon$ is bounded on $L^p(\mu)$ for every $p\in (1,\infty)$.
Moreover, since $\Rz$ is of weak type $(1,1)$, $\Rz_\varepsilon$ is also of weak type $(1,1)$.

An analogous argument applies to the adjoint operators $\Rz^*$ and $\Rz_\varepsilon^* = \Rz^* \hnabla^*$, as $\hnabla^*$ is $L^p(\mu)$-bounded for all $p \in [1,\infty]$ (see Proposition \ref{p: epsilongradient} \ref{en:epsilonslp}). Recall that we do not know (see Remark \ref{r: openproblem}) whether $\Rz^*$ is of weak type $(1,1)$.
Nevertheless, we are able to prove a weaker result, namely, the weak type $(1,1)$ boundedness of $\Rz_{\varepsilon}^*$, which can be considered as a discrete counterpart of \cite[Theorem 1]{GS}.

\begin{theorem}\label{t: Respilonstar}
The operator $\Rz_\varepsilon^*$ is of weak type $(1,1)$.
\end{theorem}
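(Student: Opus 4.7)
The plan is to write $\Rz_\varepsilon^* = \opL^{-1/2}\hnabla^*$ in the form \eqref{def: P} of an operator $\cP$ as in Theorem \ref{th:weak1P}, and then to verify the single hypothesis needed to apply that theorem, namely that the coefficient sequence $F$ belongs to $\ell^{1,\infty}(\NN)$.

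First, by Corollary \ref{c: negsquareroot}, the operator $\opL^{-1/2}$ is an integral operator with kernel of the form $q^{-\ell(x)/2}G(d(x,y))q^{-\ell(y)/2}$, so Lemma \ref{lem:halfkernel} applies and gives
\[
\Rz_\varepsilon^* = \opL^{-1/2}\hnabla^* = \sum_{n \geq 0} F(n)\,\Sigma^n\hnabla^*, \qquad F(n) \defeq q^{n/2}[G(n)-G(n+2)].
\]
The key observation is now that, by the identity \eqref{eq: Gdiff}, the coefficients collapse to values of the convolution kernel of the skew-symmetric discrete Hilbert transform on $\ZZ$: explicitly,
\[
F(n) = \tilde{k}^\ZZ(n+1) \qquad \forall n \in \NN.
\]
This is the crucial point, and it is the same cancellation mechanism (namely the Laplacian/gradient identity underlying Corollary \ref{c: negsquareroot}) that powered the proof of Theorem \ref{riesztr}; here, however, we use it to get a one-sided rather than two-sided expansion, by exploiting the fact that $\hnabla^* = \overline\varepsilon\,\Sigma$ has range orthogonal to the action of $\Sigma^*$.

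Next, the Calder\'on--Zygmund-type bound \eqref{CZktilde} for $\tilde{k}^\ZZ$ in Proposition \ref{p: rieszZZ} yields $|F(n)| = |\tilde{k}^\ZZ(n+1)| \leq C(1+n)^{-1}$, from which it is immediate that $F \in \ell^{1,\infty}(\NN)$, with norm controlled by the Calder\'on--Zygmund constant. Applying Theorem \ref{th:weak1P} to $\cP = \Rz_\varepsilon^*$ therefore produces the desired estimate
\[
\mu(\{|\Rz_\varepsilon^* f| > \lambda\}) \leq 3 \, \|F\|_{\ell^{1,\infty}(\NN)} \, \|\varepsilon\|_\infty \, \frac{\|f\|_{L^1(\mu)}}{\lambda}
\]
for all $f \in L^1(\mu)$ and $\lambda > 0$, completing the proof.

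I expect no serious obstacle in this argument: the conceptual work has already been absorbed into Lemma \ref{lem:halfkernel} and Theorem \ref{th:weak1P}. The only mildly delicate point is the recognition that, thanks to \eqref{eq: Gdiff}, the ``radial part'' $F(n)$ of $\opL^{-1/2}\hnabla^*$ is essentially the kernel $\tilde k^\ZZ$ shifted by $1$, so that the decay estimate \eqref{CZktilde} from the Euclidean-style theory on $\ZZ$ is precisely what is required by Theorem \ref{th:weak1P}. This also explains why the proof works for $\Rz_\varepsilon^*$ but apparently not for $\Rz^*$: the extra factor $\hnabla^* = \overline\varepsilon\,\Sigma$ provides the orthogonality relations \eqref{ortrel} which, combined with the Gaudry--Sj\"ogren-style splitting used in the proof of Theorem \ref{th:weak1P}, allow one to get by with merely weak $\ell^1$ control on the coefficients.
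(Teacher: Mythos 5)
Your argument is correct and is essentially identical to the paper's own proof: both identify $\Rz_\varepsilon^* = \opL^{-1/2}\hnabla^*$ as an operator of the form \eqref{def: P} via Corollary \ref{c: negsquareroot} and Lemma \ref{lem:halfkernel}, use \eqref{eq: Gdiff} to recognize the coefficients as $\tilde{k}^{\ZZ}(n+1)$, and then invoke \eqref{CZktilde} and Theorem \ref{th:weak1P}. Nothing further is needed.
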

\begin{proof}
In light of Corollary \ref{c: negsquareroot}, we can apply Lemma \ref{lem:halfkernel} with $\cK = \opL^{-1/2}$ and $G$ given by \eqref{eq: G}; thus, by \eqref{eq: Gdiff},
\begin{equation}\label{Repsilonstar}
\Rz_{\varepsilon}^* = \opL^{-1/2} \hnabla^* = \sum_{n\ge 0} \tilde{k}^{\ZZ}(n+1) \, \Sigma^n \hnabla^* .
\end{equation}
On the other hand, by \eqref{CZktilde}, $\tilde{k}^{\ZZ}(1+\cdot)|_\NN$ belongs to $\ell^{1,\infty}(\NN)$, so the desired bound follows by Theorem \ref{th:weak1P}.
\end{proof}

We point out that, as was the case for the Riesz transform $\Rz$ (see Remark \ref{oss:noHorm}), the previous weak type endpoint result cannot be deduced by showing that the integral kernel of $\Rz_\varepsilon$ satisfies the dual H\"ormander condition \eqref{eq: hormanderdual}. This is a consequence of the following negative endpoint result for the horizontal Riesz transforms, analogous to the one for $\Rz$ discussed in Section \ref{ss: counterexample}.

\begin{prop}\label{p:horiz_h1l1}
If $\varepsilon$ is not identically zero, then $\Rz_\varepsilon^*$ does not map $H^1(\mu)$ into $L^1(\mu)$.
\end{prop}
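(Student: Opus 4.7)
The plan is to mimic the proof of Proposition \ref{h1l1}, but replacing the duality argument with a direct computation: I will exhibit a single $(1,\infty)$-atom $a$ for which $\|\Rz_\varepsilon^* a\|_{L^1(\mu)} = +\infty$, which immediately rules out $H^1(\mu)\to L^1(\mu)$ boundedness of $\Rz_\varepsilon^*$ thanks to the atomic decomposition. The basic ingredient is the expansion \eqref{Repsilonstar} for $\Rz_\varepsilon^*$, combined with the explicit form $\hnabla^* g(x) = \overline{\varepsilon(x)}\, g(\pred(x))$, which makes the action of each $\Sigma^n \hnabla^*$ on an atom easy to track.

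To set up the atom, pick $z_0 \in T$ with $\varepsilon(z_0) \neq 0$ (available since $\varepsilon \not\equiv 0$), and set $y_1 = \pred(z_0)$, $w = \pred(y_1)$, and $y_2 \in \succ(w)\setminus\{y_1\}$ (which exists because $q \geq 2$). Then $F = \succ(w) = F_1^2(w)$ is an admissible trapezoid of measure $q^{\ell(w)}$, and $a = q^{-\ell(w)}(\delta_{y_1} - \delta_{y_2})$ is a $(1,\infty)$-atom supported on $F$.

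Using the identity $\Sigma^n \hnabla^* a(x) = \overline{\varepsilon(\pred^n(x))}\, a(\pred^{n+1}(x))$, one sees that $\Rz_\varepsilon^* a(x)$ vanishes unless $x$ is a strict descendant of $y_1$ or $y_2$; moreover, these two regions are disjoint, so no sign cancellation between the contributions of $y_1$ and $y_2$ can occur in the $L^1$ norm. For $x < y_j$ with $d(x, y_j) = k \geq 1$, only the index $n = k-1$ in \eqref{Repsilonstar} contributes, yielding $\Rz_\varepsilon^* a(x) = \pm\, q^{-\ell(w)}\, \tilde k^\ZZ(k)\, \overline{\varepsilon(z_x)}$, where $z_x \in \succ(y_j)$ is the unique vertex on the geodesic from $x$ to $y_j$ (with $z_x = x$ when $k=1$).

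To compute the $L^1(\mu)$ norm, group the sum over $x$ by $j \in \{1,2\}$, $k \geq 1$, and $z \in \succ(y_j)$: for each such triple there are exactly $q^{k-1}$ descendants of $z$ at distance $k-1$ satisfying $z_x = z$, and each has $\mu$-measure $q^{\ell(w)-1-k}$. The product $q^{k-1} \cdot q^{\ell(w)-1-k} = q^{\ell(w)-2}$ being independent of $k$, a straightforward summation yields
\[
\|\Rz_\varepsilon^* a\|_{L^1(\mu)} = q^{-2}\left(\sum_{k \geq 1}|\tilde k^\ZZ(k)|\right)\sum_{z \in \succ(y_1)\cup\succ(y_2)}|\varepsilon(z)|.
\]
The second factor is at least $|\varepsilon(z_0)| > 0$ by the choice of $y_1$, while the first is infinite by \eqref{eq:rieszZZformulas}, since $|\tilde k^\ZZ(k)| \sim 2\sqrt{2}/(\pi k)$. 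The main technical point is the cancellation of $q$-powers that renders the summand constant in $k$; once this is observed, the obstruction to $H^1\to L^1$ boundedness reduces, morally, to the failure of the discrete Hilbert transform kernel to lie in $\ell^1(\NN)$.
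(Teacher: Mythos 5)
Your argument is correct and is essentially the paper's own proof: you use the same atom (your $y_1,y_2,w$ are the paper's $x_1,x_2,\bar x$, chosen so that $\varepsilon$ does not vanish on $\succ(x_1)$), the same expansion \eqref{Repsilonstar}, the observation that the terms $\Sigma^n \hnabla^* a$ have pairwise disjoint supports (so no cancellation occurs), and the divergence of $\sum_{n\ge 0}\tilde k^\ZZ(n+1)$. The only difference is bookkeeping: you compute $\|\Rz_\varepsilon^* a\|_{L^1(\mu)}$ by explicitly counting descendants, whereas the paper gets the same value by invoking the $L^1(\mu)$-isometry of $\Sigma$ applied to $\hnabla^* a$.
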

\begin{proof}
As $\varepsilon \not\equiv 0$, there exists $x_1 \in T$ such that $\varepsilon|_{\succ(x_1)} \not\equiv 0$.
Let $\bar x = \pred(x_1)$ and take $x_2 \in \succ(\bar x) \setminus \{ x_1 \}$.

Much as in the proof of Proposition \ref{h1l1}, consider the admissible trapezoid $F = F_1^2(\bar x) = \succ(\bar x)$, 
and define the $(1,\infty)$-atom $a = \mu(F)^{-1} (\delta_{x_1} - \delta_{x_2})$ supported in $F$. Then,
$\hnabla^*a = \overline{\varepsilon} \, \Sigma a$ by Proposition \ref{p: epsilongradient}. In particular, for any $y \in \succ(x_1)$, $\hnabla^*a(y) = \mu(F)^{-1} \, \overline{\varepsilon}(y)$, and therefore $\hnabla^* a \not\equiv 0$, because $\varepsilon|_{\succ(x_1)} \not\equiv 0$.

From the identity $\hnabla^*a = \overline{\varepsilon} \, \Sigma a$ we also deduce that $\supp(\hnabla^* a) \subseteq \succ^2(\bar x)$, and therefore $\supp(\Sigma^n \hnabla^* a) \subseteq \succ^{n+2}(\bar x)$; here we are using the notation $\succ^n(x)$ from \eqref{eq:descendants}. In particular, the supports of the functions $\Sigma^n \hnabla^* a$, $n \in \NN$, are pairwise disjoint. As $\Sigma$ preserves $L^1(\mu)$-norms (see Proposition \ref{SigmaSigma*}), from \eqref{Repsilonstar} we conclude that
\[
\|\Rz_\varepsilon^* a\|_{L^1(\mu)} = \sum_{n\ge 0} \tilde{k}^{\ZZ}(n+1) \|\Sigma^n \hnabla^* a\|_{L^1(\mu)} = \|\hnabla^* a\|_{L^1(\mu)} \sum_{n\ge 0} \tilde{k}^{\ZZ}(n+1) = +\infty,
\]
where we used Proposition \ref{p: rieszZZ} and the fact that $\hnabla^* a \not\equiv 0$.
\end{proof}

\providecommand{\bysame}{\leavevmode\hbox to3em{\hrulefill}\thinspace}

\end{document}